\newtheoremstyle{mythm}%
  {\topskip}
  {\topskip}
  {\itshape}
  {}
  {\bfseries}
  {\\}
  {\parindent}
  {\thmname{#1}\thmnumber{ #2}\thmnote{ \textit{#3}}}
\newtheoremstyle{mydef}%
  {\topskip}
  {\topskip}
  {}
  {}
  {\bfseries}
  {\\}
  {\parindent}
  {\thmname{#1}\thmnumber{ #2}\thmnote{ #3}}
\theoremstyle{mythm}
\newtheorem{lem}{Lemma}[section]
\newtheorem{thm}[lem]{Theorem}
\newtheorem*{thm*}{Theorem}
\newtheorem{cor}[lem]{Corollary}
\newtheorem*{cor*}{Corollary}
\newtheorem{prop}[lem]{Proposition}
\newtheorem*{conj*}{Conjecture}
\theoremstyle{mydef}
\newtheorem{dfn}[lem]{Definition}
\newtheorem{rem}[lem]{Remark}
\newcommand{\mbb}[1]{\mathbb #1}
\newcommand{\mc}[1]{\mathcal #1}
\newcommand{\oper}[1]{\operatorname{#1}}
\newcommand{\hra}{\hookrightarrow}
\newcommand{\Gm}{\mbb G_m}
\newcommand{\mult}{^{\text{\raisebox{.1ex}{$\times$}}}}
\newcommand{\Br}{\oper{Br}}
\newcommand{\per}{\oper{per}}
\newcommand{\res}{\oper{res}}
\newcommand{\ind}{\oper{ind}}
\newcommand{\lcm}{\oper{lcm}}
\renewcommand{\deg}{\oper{deg}}
\newcommand{\ram}{\oper{ram}}
\newcommand{\Gal}{\oper{Gal}}
\newcommand{\Hom}{\oper{Hom}}
\newcommand{\cha}{\oper{char}}
\newcommand{\cross}[3]{\Delta\!\left(#1, #2, #3\right)}
\newcommand{\Spec}{\oper{Spec}}
\newcommand{\Mat}{\oper{Mat}}
\newcommand{\s}{{\oper{s}}} 
\newcommand{\Ind}{\operatorname{Ind}}
\renewcommand{\P}{{\mathbb P}}
\newcommand{\PP}{\operatorname{PP}}
\newcommand{\Q}{\mbb Q}
\newcommand{\Z}{\mbb Z}
\newcommand{\N}{\mbb N}
\newcommand{\C}{\mbb C}
\renewcommand{\P}{\mbb P}
\newcommand{\til}{\widetilde}
\newcommand{\wh}{\widehat}
\def\<{\left<}
\def\>{\right>}
\title{Patching subfields of division algebras}
\author{David Harbater}
\author{Julia Hartmann}
\author{Daniel Krashen}
\date{Version of October 13, 2009.  
The authors were respectively supported in part by NSF Grant DMS-0500118, 
the German National Science Foundation (DFG), and an NSA Young Investigator's Grant.\\
\textit{2000 Mathematics Subject Classification}.  Primary: 12F12, 16K20, 14H25; Secondary: 16S35, 12E30, 16K50.\\
\textit{Key words}: admissibility, patching, division algebras, Brauer groups, Galois groups.}
\begin{document}

\maketitle

\begin{abstract}
Given a field $F$, one may ask which finite groups are Galois groups of field extensions $E/F$ such that $E$
is a maximal subfield of a division algebra with center $F$. This question was originally posed by Schacher,
who gave partial results in the case $F = \mathbb Q$. Using patching, we give a complete characterization
of such groups in the case that $F$ is the function field of a curve over a complete discretely valued field with
algebraically closed residue field of characteristic zero, as well as results in related cases.
\end{abstract}

\section{Introduction}

In this manuscript we consider a problem, posed by Schacher in \cite{Sch:subfields}, that relates inverse Galois theory to division algebras.  Given a field $F$, Schacher asked which finite groups $G$ are \textit{admissible} over~$F$, meaning that there is a $G$-Galois field extension $E/F$ with the property that $E$ is a maximal subfield of an $F$-division algebra $D$.  Like the original inverse Galois problem, this problem is generally open; but unlike the original problem, the set of groups that can arise in this manner is often known to be quite restricted (even for $F = \Q$).

This problem is a natural one because of the relationship between maximal subfields of division algebras and crossed product algebras over a given field.  Crossed product algebras can be described explicitly, and are well understood.  In those terms, the above problem can be rephrased as asking for the set of groups $G$ for which there exists an $F$-division algebra that is a crossed product with respect to $G$.

Past work on admissibility has concentrated on the case of global fields.
In \cite{Sch:subfields}, Schacher gave a criterion that is necessary for admissibility of a group over the field $\mbb Q$, and which he
conjectured is also sufficient:

\begin{conj*}[\cite{Sch:subfields}]
Let $G$ be a finite group. Then $G$ is admissible over $\mbb Q$ if and
only if every Sylow subgroup of $G$ is metacyclic.
\end{conj*}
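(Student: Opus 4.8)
The displayed statement is an equivalence whose two halves have very different status: the forward implication is Schacher's necessary condition for admissibility (a theorem), while the reverse is his open conjecture. I would treat them separately.

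\emph{Necessity (the forward implication).} Suppose $G$ is admissible over $\Q$, witnessed by a $G$-Galois extension $E/\Q$ that is a maximal subfield of a $\Q$-division algebra $D$; then $\ind D = [E:\Q] = |G|$ and $E$ splits $D$. Fix a prime $p$ and write $p^a$ for the exact power of $p$ dividing $|G|$; to prove a Sylow $p$-subgroup of $G$ is metacyclic one may assume $a \ge 2$. By Albert--Brauer--Hasse--Noether, $\ind D = \lcm_v \ind D_v$ over the places $v$ of $\Q$, with $\ind D_v$ equal to the order of $\oper{inv}_v(D)$; since $\ind D_\infty \le 2$, some finite place has the $p$-part of its local index equal to $p^a$. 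The local mechanism is this: for any finite place $v$ with $p^a \mid \ind D_v$, choose a place $w$ of $E$ over $v$ with decomposition group $G_w = \Gal(E_w/\Q_v) \le G$; as $E_w$ splits $D_v$ we get $\per(D_v) \mid [E_w:\Q_v]$, and period equals index over $\Q_v$, so $\ind D_v \mid [E_w:\Q_v] = |G_w|$; hence $p^a \mid |G_w|$, so $G_w$ contains a Sylow $p$-subgroup $P$, and $P = \Gal(E_w/E_w^P)$ is realized over the local field $L := E_w^P$, whose residue characteristic is that of $v$. Two cases arise. If some such $v$ has residue characteristic $\ell \ne p$, then $E_w/L$ is tamely ramified, its inertia group $P_0 \trianglelefteq P$ is cyclic (the wild inertia, an $\ell$-group, is trivial in the $p$-group $P$) and $P/P_0 \cong \Gal(\ov{E_w}/\ov L)$ is cyclic, so $P$ is metacyclic. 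Otherwise every place with $p$-part of local index $p^a$ lies over $p$; since $\Q$ has a single place over $p$, this forces the $p$-part of $\ind D_p$ to be $p^a$ while the $p$-part of $\ind D_v$ divides $p^{a-1}$ for all $v \ne p$ (using $a \ge 2$ at $v = \infty$). Writing $\alpha_v$ for the $p$-primary component of $\oper{inv}_v(D)$ in $\Q/\Z$, the identity $\sum_v \oper{inv}_v(D) = 0$ yields $\alpha_p = -\sum_{v \ne p}\alpha_v$, of order dividing $p^{a-1}$ --- contradicting $\oper{ord}(\alpha_p) = p^a$. So the second case is excluded, and every Sylow of $G$ is metacyclic. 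I do not expect a serious obstacle here: this is in essence Schacher's argument.

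\emph{Sufficiency (the conjecture).} This is open, so what follows is a strategy rather than a proof. Assume each Sylow $G_p$ of $G$ is metacyclic; I would try to construct a $G$-Galois extension $E/\Q$ and a class of index $|G|$ in $\Br(\Q)$ split by $E$, so that $E$ (of degree $|G|$) is a maximal subfield of the corresponding $\Q$-division algebra $D$. For the Brauer class, balance invariants: for each $p \mid |G|$, writing $p^{a_p}$ for the $p$-part of $|G|$, pick two rational primes $q_p, q_p'$ and set $\oper{inv}_{q_p}(D) = p^{-a_p} = -\oper{inv}_{q_p'}(D)$, with all other local invariants $0$; then $\sum_v \oper{inv}_v(D) = 0$ and $\ind D = \prod_p p^{a_p} = |G|$. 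For $E$ to split $D$ it suffices that the decomposition group of $E/\Q$ at each $q_p$ and each $q_p'$ be a copy of $G_p$, realized by a mildly ramified local extension of $\Q_{q_p}$ --- which is possible at a prime $q_p \ne p$ precisely because $G_p$ is metacyclic (indeed, by the local analysis above, a $p$-group is realizable over a local field of residue characteristic $\ne p$ if and only if it is metacyclic). Thus the problem reduces to a strong form of the inverse Galois problem: realize $G$ over $\Q$ with prescribed decomposition groups equal to its Sylow subgroups at prescribed auxiliary primes. I would attack this via Shafarevich's theorem in the solvable case, rigidity and specialization for nonsolvable composition factors, and Grunwald--Wang to install the required local behavior. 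The main obstacle is exactly this realization step, which contains the ordinary inverse Galois problem for $G$ and adds local constraints on top; consequently the conjecture is known only where such realizations are available (solvable groups; various families of almost-simple groups). Finally, two remarks show why metacyclicity of the Sylows is the precise hypothesis: one cannot shortcut the construction by loading the entire index onto a single auxiliary place, since a local field has prosolvable absolute Galois group and hence can split a $G$-Galois algebra only when $G$ is solvable; and, as noted, realizing $G_p$ over an auxiliary prime away from $p$ is possible exactly for metacyclic $G_p$, modulo Grunwald--Wang subtleties when $p = 2$.
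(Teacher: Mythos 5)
This displayed statement is not proved in the paper --- it is Schacher's still-open conjecture, quoted verbatim from \cite{Sch:subfields}, and the paper uses it only as motivation for the analogous (and fully proved) result over function fields. You correctly recognize this asymmetry, and your treatment of the two directions is appropriately calibrated. Your sketch of the forward implication is essentially Schacher's original argument and is sound: the reduction via Albert--Brauer--Hasse--Noether to a finite place where the local index has full $p$-part $p^a$, the use of period $=$ index over $\Q_v$ to force the decomposition group to contain a Sylow $p$-subgroup $P$, the exclusion of the case ``only the place over $p$ carries the full $p$-part'' via the reciprocity relation $\sum_v \oper{inv}_v(D)=0$, and the tame-ramification argument (cyclic inertia, cyclic residue extension) yielding metacyclicity of $P$. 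It is worth noting that this is precisely the argument the paper abstracts in Section~3: the reciprocity step is the prototype of the notion ``determined by ramification'' (Proposition~\ref{determined_by_ram_prop}), and the local analysis is Theorem~\ref{ram_main_thm} combined with Lemma~\ref{ram_abelian_lem}(a); the paper's extra hypothesis on roots of unity in Lemma~\ref{ram_abelian_lem}(b) is what upgrades ``metacyclic'' to ``abelian of rank at most $2$'' in the function-field setting, where the residue fields are algebraically closed rather than finite.

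Two small cautions on your sufficiency discussion, which you rightly present as a strategy for an open problem rather than a proof. First, a metacyclic $p$-group $G_p$ is \emph{not} realizable as a Galois group over $\Q_q$ for an arbitrary prime $q\ne p$: the prime-to-$q$ quotient of $\Gal(\ov{\Q}_q/\Q_q)$ is generated by Frobenius $\sigma$ and a tame generator $\tau$ with $\sigma\tau\sigma^{-1}=\tau^q$, so realizing a given $G_p$ imposes congruence conditions on $q$ modulo $p$-powers (this is where Schacher's argument genuinely uses Dirichlet's theorem to select the auxiliary primes). Second, for $E$ to split $D$ you need every decomposition group over each $q_p$ to have order divisible by $p^{a_p}$, not just one; this is automatic by conjugacy but worth stating. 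Neither point is a gap in what you claim --- you prove nothing false --- but the phrase ``possible at a prime $q_p\ne p$ precisely because $G_p$ is metacyclic'' overstates the local realizability criterion.
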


Although still open in general, many particular groups and types of groups
satisfying this criterion have been shown in fact to be admissible over $\mbb Q$; see
for example
\cite{Sonn:solv,SchSonn:A6-A7,ChSonn,FeitFeit,FeiVoj,Feit:2A6-2A7,Feit:sl2-11Q,Feit:psl2-11}.
  Also, Corollary~10.3 of \cite{Sch:subfields} shows that admissible
groups over a global field of characteristic $p$ have metacyclic Sylow
subgroups at the primes other than $p$.

The main theorem of our paper is the following result (see
Theorem~\ref{main_thm}):

\begin{thm*}
Let $K$ be a field that is complete with respect to a discrete
valuation and whose residue field $k$ is algebraically closed.
Let $F$ be a finitely generated field extension of $K$ of transcendence degree one.  Then a finite
group $G$ with $\cha(k) \nmid |G|$ is admissible over $F$ if and only if
every Sylow subgroup of $G$ is abelian of rank at most $2$.
\end{thm*}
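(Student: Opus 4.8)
The plan is to prove the two directions separately, relying on the local/global structure of the Brauer group of $F$ and on patching.

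\textbf{Necessity.} Suppose $G$ is admissible over $F$, realized by a $G$-Galois field extension $E/F$ that is a maximal subfield of an $F$-division algebra $D$; thus $[D:F] = |G|^2$ and $\per(D) \mid \exp(G)$ while $\ind(D) = |G|$. For a prime $\ell \neq \cha(k)$, let $P$ be a Sylow $\ell$-subgroup, and let $L = E^P$, so $[E:L] = |P|$ and $E/L$ is $P$-Galois. The centralizer $C_D(L)$ is an $L$-division algebra of degree $|P|$ containing $E$ as a maximal subfield, so $P$ is admissible over $L$. Now $L$ is again a one-variable function field over $K$ (being finite over $F$), and I would use the known description of $\Br(L)$ for such fields — via the ramification sequence attached to a regular model, so that a class in $\Br(L)[\ell^\infty]$ is controlled by residues along the (finitely many) codimension-one points, each residue living in a group of the form $H^1$ of the residue field, which is a function field of a curve over $k$ or over a complete discretely valued field with residue field $k$. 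The key arithmetic input is that these residue fields have cohomological dimension $\le 1$ away from $\cha k$, so the period equals the index for symbols and, more importantly, any division algebra of index $\ell^n$ over $L$ is ``not too far'' from a symbol: concretely I expect a lemma (proved earlier in the paper) saying that a $P$-crossed product division algebra over $L$ forces $P$ to be generated by at most two elements \emph{and} to be abelian. The rank-$\le 2$ part is the statement that the index divides $\ell^{2}$ worth of ``independent'' ramification — a Brauer class over a surface-like field is split by a degree-$\ell^2$ extension — hence a faithful crossed product needs $|P| \mid \ell^2$ is too strong; rather, $P$ embeds in a product of two cyclic groups because $E/L$ must be ``built from'' two cyclic pieces corresponding to the two ``directions'' of ramification. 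The non-obvious point, and where I'd lean hardest on the paper's earlier lemmas, is ruling out nonabelian metacyclic $\ell$-groups: over $\Q$ these \emph{are} admissible, so the function-field setting must use the weaker ramification (only two ``places'' worth, rather than infinitely many as over $\Q$) to force commutativity.

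\textbf{Sufficiency.} Conversely, suppose every Sylow subgroup of $G$ is abelian of rank $\le 2$. The strategy is patching over the function field $F$: choose a model and a suitable covering of the closed fiber into patches $U_i$ (or branches and points), giving overfields $F_{U_i}$, $F_{\wp}$ of $F$ with $F = \bigcap F_{U_i}$ in the patching sense, so that $G$-Galois algebras and Brauer classes can be prescribed separately on the patches and glued. On each patch I want to (i) realize a given finite group as a Galois group — using that the $F_{U_i}$ are ``large'' fields for which inverse Galois is known, in fact every finite group is a Galois group over them — and (ii) simultaneously produce a division algebra whose index is controlled. The precise mechanism: decompose the problem prime by prime. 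For each prime $\ell \mid |G|$ with Sylow $\ell$-subgroup $G_\ell \cong C_{\ell^{a}} \times C_{\ell^{b}}$, place a cyclic $C_{\ell^a}$-extension with a ramified Brauer class on one patch and a $C_{\ell^b}$-extension with an independent class on another, so that the tensor product (Brauer sum) has index exactly $\ell^{a+b} = |G_\ell|$; this uses that two ``independent directions'' suffice on a two-dimensional arithmetic object, which is exactly where rank $\le 2$ is both necessary and sufficient. Then patch these local Galois realizations into a global $G$-Galois extension $E/F$ (possible because $G$ has a generating set compatible with the patch structure — one uses that $G$ is generated by its Sylow subgroups and arranges the patches accordingly), and patch the local algebras into an $F$-algebra $D$. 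One checks $\ind_F(D) = \prod_\ell \ell^{a_\ell + b_\ell} = |G|$ by computing the index locally, and that $E \otimes_F F_v$ splits $D\otimes_F F_v$ at every relevant place, so $E$ splits $D$; since $[E:F] = |G| = \ind(D) = \deg(D)$, $E$ is a maximal subfield and $G$ is admissible.

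\textbf{Main obstacle.} The genuinely hard part is the sufficiency construction: making the Galois patching and the Brauer-class patching \emph{compatible}, i.e. producing a \emph{single} $G$-Galois field $E/F$ (not just a Galois algebra, and not merely the individual Sylow extensions) which simultaneously serves as a maximal subfield of the patched division algebra $D$. This requires choosing the patches and the local data so that (a) the local Galois groups assemble to exactly $G$ via the relevant amalgamation/generation statement for patching, (b) $E$ is a field and not a nontrivial product, and (c) the local index computations add up to $|G|$ with no collapse — which is precisely the constraint that each Sylow piece needs exactly two independent ramified cyclic pieces, no more (rank $\le 2$) and no fewer in the nonabelian-would-be case (forcing abelianness). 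Controlling the index of a patched algebra — showing it does not drop below the product of the local indices — is the technical crux and is where I would expect the paper to invoke its own key patching lemma about indices of glued algebras.
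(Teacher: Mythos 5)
Your overall architecture (reduce necessity to Sylow subgroups via ramification; prove sufficiency by patching) matches the paper, but in both directions the step you identify as the crux is left unresolved, and in the sufficiency direction the specific plan you propose would fail.

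\textbf{Necessity.} Your reduction to the Sylow subgroup (via the centralizer $C_D(E^P)$, equivalently the class $[D\otimes_F E^P]$) is fine, but you never supply the mechanism that forces $P$ to be \emph{abelian} rather than merely metacyclic, and you explicitly defer it to ``a lemma proved earlier in the paper.'' That mechanism is the heart of the argument. The paper first uses two global inputs you do not mention precisely: $\per=\ind$ for prime-to-$\cha(k)$ classes, and triviality of the unramified Brauer group of a regular model (so every class is ``determined by ramification'' at some codimension-one point $v$). These together guarantee that at a suitable place $w$ of $E^P$ over $v$ with $p\nmid e_{w/v}f_{w/v}$, the residue of $[D\otimes_F E^P]$ has full period $|P|$; a short diagram chase then shows the residue field extension of $E/E^P$ at $w$ is cyclic and $w$ has a unique extension, so the extension of \emph{completions} is still $P$-Galois. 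Now $P$ is an extension of a cyclic unramified quotient by a cyclic tame inertia group, hence metacyclic; and the commutator of the two generators is computed by the action of Frobenius on the relevant root of unity $\zeta_e$. Abelianness holds exactly when $\zeta_e$ lies in the base — which it does here because $k$ is algebraically closed, so $F$ contains all prime-to-$\cha(k)$ roots of unity by Hensel. This is precisely why nonabelian metacyclic groups survive over $\Q$ (no roots of unity) but are excluded here; your appeal to ``weaker ramification'' is not the reason.

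\textbf{Sufficiency.} Your plan to ``place a cyclic $C_{\ell^a}$-extension with a ramified Brauer class on one patch and a $C_{\ell^b}$-extension with an independent class on another'' does not work as stated. When you patch central simple algebras, each local index divides the global index, so the global index is forced to be divisible only by the \emph{least common multiple} of the local indices, namely $\ell^{\max(a,b)}$ — not the product $\ell^{a+b}$ that you need for $\ind(D)=|G|$. Correspondingly, the patching lemma for the Galois side requires the greatest common divisor of the indices $(G:H_Q)$ to be $1$, i.e.\ each full Sylow subgroup must occur as some single $H_Q$; splitting $C_{\ell^a}\times C_{\ell^b}$ across two patches violates this, and the resulting $G$-Galois algebra $E$ need not even be a field. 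The paper's solution is to realize the \emph{entire} rank-two Sylow subgroup at a single closed point $Q_i$ of the model: the complete local ring $\wh R_{Q_i}$ is two-dimensional regular with maximal ideal $(f_i,t_i)$, one takes the bicyclic Kummer extension defined by $y^q=f_i/(f_i-t_i)$, $z^{q'}=(f_i-t_i^2)/(f_i-t_i-t_i^2)$ together with the corresponding symbol algebra, and proves the latter is a division algebra of degree $qq'$ using a \emph{rank-two} valuation $u\circ w$ on $F_{Q_i}$ for which the values of the two slots generate $(\Z/qq'\Z)^2$. The two ``independent directions'' live at one point, not on two patches; the other points carry trivial data, the local extensions split on the branches so the cocycle conditions are automatic, and then the lcm argument does give $\ind(A)=|G|$, whence $A$ is division and $E$ is a field.
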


The forward direction of this theorem (Proposition~\ref{main_forward})
is analogous to Schacher's results.  As in \cite{Sch:subfields}, a
key ingredient is the equality of period and index in the Brauer group.
The converse direction to our theorem is proven using patching methods from
\cite{HH:FP}, an approach that is not available in the case of global fields, and which makes possible a variety of results for function fields as in the above theorem.  In fact, the equality of period and index used in the forward direction can also be proven by such methods (see \cite{HHK:uinv}).  

In the equal characteristic zero situation, the base field $K$ is
quasi-finite  (i.e.\ perfect with absolute Galois group $\widehat \Z$;
see \cite{Serre:LF}, XIII.2), and $F$ is thus analogous to a global
field, viz.\ to a function field over a finite field.  In that situation, our main theorem
provides a necessary and sufficient condition for an arbitrary finite
group to be admissible over $F$.

This manuscript is organized as follows.  Section~\ref{generalities}
provides background and introduces the notion of an element of the
Brauer group being ``determined by ramification''.  This notion is used
in Section~\ref{sec_admis} to obtain a criterion
(Theorem~\ref{ram_main_thm}) that is then applied to prove the forward
direction of our main theorem.  That section concludes with two
corollaries on admissibility for rational function fields.  Finally,
Section~\ref{sec_patch} recalls ideas from \cite{HH:FP} concerning
patching, and uses them to prove the converse direction of our main
result, Theorem~\ref{main_thm}.

\section*{Acknowledgments}

The authors thank Max Lieblich, Danny Neftin, David Saltman, and Jack Sonn
for helpful discussions during the preparation of this manuscript. We also extend our thanks for the especially valuable comments that we received from the anonymous referee.
\section{Brauer Groups and Ramification}\label{generalities}

The notions of admissibility and crossed product algebras can be understood in terms of Brauer groups of fields, and we review that relationship in this section.  In the case of discretely valued fields, we also describe properties of the ramification map, which associates a cyclic field extension of the residue field to each Brauer class.  This ramification map is an important tool in studying the Brauer group, and 
Proposition~\ref{determined_by_ram_prop} below will play a key role in the next section.

First, we recall some standard facts about central simple algebras and
(central) division algebras; for more detail see \cite{Pie}, \cite{Sa:LN}, \cite{Jac:DA} or \cite{GiSz}.  The \textit{degree} of a central simple $F$-algebra $A$ is the square root of its $F$-dimension, and its \textit{(Schur) index} is the degree of the division algebra $D$ such that $A \cong \Mat_r(D)$ for some $r \geq 1$.  Equivalently, the index is the degree of a minimal \textit{splitting field} for $A$, i.e.\ a field extension $E/F$ such that $A$ splits over $E$ in the sense that $A\otimes_F E$ is a matrix algebra over $F$.  In fact the index divides the degree of any splitting field (\cite{Pie}, Lemma~13.4).  The \textit{Brauer group}  $\Br(F)$ of $F$ consists of the \textit{Brauer
equivalence classes} of central simple $F$-algebras, with the operation of tensor product.
Here two algebras are declared equivalent if they have isomorphic underlying division algebras.  
The order of the class $[A]$ of $A$ in $\Br(F)$ is called its \textit{period} (or \textit{exponent}), and $\per A\, |\, \ind A$ (\cite{Pie}, Proposition~14.4b(ii)).
We will also write $\per(a)$ for the order of an element $a$ in an arbitrary abelian group.

The Brauer group of $F$ may be explicitly identified with the second Galois
cohomology group $H^2(F, \Gm)$.
Concretely, if $E/F$ is a $G$-Galois extension and $c: G \times G \to
E\mult$ is a $2$-cocycle with respect to the standard action of $G$ on
$E\mult$, there is an associated central simple $F$-algebra
$\cross{E}{G}{c}$.  As an $E$-vector space, this algebra has a basis
$u_\sigma$ in bijection with the elements $\sigma$ of the group
$G$.  Multiplication in this algebra is given
by the formulas
\[u_\sigma u_\tau := c(\sigma, \tau) u_{\sigma \tau} \ \ \ \ \ \ \ \ \ \
u_\sigma x := \sigma(x) u_\sigma\]
for $\sigma, \tau \in G$, $x \in E$. We say that an $F$-algebra is a \textit{crossed
product} if it is isomorphic to an algebra of this form. This
construction gives rise (\cite{Sa:LN}, Corollary~7.8) to an isomorphism
\[H^2(F, \Gm) \to \Br(F),\]
implying in particular that every central simple algebra is Brauer
equivalent to a crossed product algebra.

If $A$ is a central simple $F$-algebra of degree $n$, then a commutative 
separable $F$-subalgebra of $A$ is maximal among all such subalgebras if and only if its dimension over $F$ is $n$ 
(\cite{grothbrauer1}, Proposition~3.2).
In the case that $E$ is a subfield, we will refer to $E$ as a \textit{maximal subfield}, following \cite{Sch:subfields}.  Note
that ``maximal subfield'' therefore means not merely that $E$ is maximal as a
subfield, but in fact that it is maximal as a commutative separable subalgebra.  In particular, if $A =
\Mat_2(\C)$, then $A$ has no maximal subfields, since $\C$ has no proper
algebraic extensions.  In the case that $A$ is a division algebra,
however, any commutative separable subalgebra must actually be a field, and so the two notions of maximality agree.
(In \cite{Pie}, \S13.1,  the term ``strictly maximal'' is used for what we call 
``maximal''; and the term ``maximal'' is used there in the weaker sense.)

If $A = \cross{E}{G}{c}$, then $E$ must be a maximal subfield
of $A$, since $\deg A = [E:F]$.  Conversely, if $E$ is a maximal subfield of 
$A$, and $E$ is a $G$-Galois extension of $F$, then $E$ is a splitting field for $A$ (\cite{Pie}, Theorem~13.3) and hence $A =
\cross{E}{G}{c}$ for some $2$-cocycle $c$ (\cite{Sa:LN}, Corollary~7.3).
Thus $G$ is admissible over $F$ if and only if there is a crossed product $F$-division algebra with respect to $G$.

\smallskip

Now consider a field $F$ together with a discrete valuation $v$, completion $F_v$, and residue field $k_v$ at $v$.  
Let $\Br(F)'$ (resp.\ $ H^1(k_v, \Q/\Z)'$) denote the subgroup of elements whose period is prime to $\cha(k_v)$  
(so in particular, $\Br(F)' = \Br(F)$ etc.\ if $\cha(k_v) = 0$). 
Recall from \cite{Sa:LN}, Chapter~10, that there is 
a \textit{ramification map} $\ram_v: \Br(F)' \to H^1(k_v, \Q/\Z)'$,
which factors through the corresponding map on $\Br(F_v)'$.
More generally, if $\Omega$ is a set of discrete valuations on $F$, then we may consider the intersection of the above subgroups of $\Br(F)$, as $v$ ranges over the elements of $\Omega$.  Below, the choice of $\Omega$ will be clear from the context, and we will simply write $\Br(F)'$ for the intersection.  In that situation, for each $v \in \Omega$ the map $\ram_v: \Br(F)' \to H^1(k_v, \Q/\Z)'$ is defined on this intersection.  

\begin{dfn}
Let $F$ be a field, $\Omega$ a set of discrete valuations on $F$, and $\alpha \in \Br(F)'$.  We say that $\alpha$ is
\textit{determined by ramification} (with respect to $\Omega$) if there is some $v \in \Omega$ such that
\[\per(\alpha) = \per(\ram_v\alpha).\]
\end{dfn}

Note that if $\ram_v$ is injective for some $v \in \Omega$, then every class in $\Br(F)'$ is determined by ramification.  This rather special situation is generalized in Proposition~\ref{determined_by_ram_prop} below, which will play an important role in Section~\ref{sec_admis}.  First we make a definition and introduce some notation.

\begin{dfn}
Let $F$ be a field and $\Omega$ a set of discrete valuations on $F$. We define
the \textit{unramified Brauer group} of $F$ (with respect to $\Omega$), denoted by
$\Br_u(F)'$, to be the kernel of
\[\xymatrix{
\Br(F)' \ar[rr]^-{\prod \ram_v} && {\prod_{v \in \Omega}} H^1(k_v, \mbb Q/\mbb Z)'.}\]
\end{dfn}

For a prime $p$ and an integer $n$, let $n_p$ denote the largest power
of $p$ that divides $n$.  A Brauer class $\alpha \in \Br(F)$ may be 
expressed as $\alpha = \alpha_p + (\alpha-\alpha_p)$ for some unique class
$\alpha_p$ (viz.\ a prime-to-$p$ multiple of $\alpha$) satisfying 
$\per(\alpha_p) = (\per(\alpha))_p$ and
$(\per(\alpha-\alpha_p))_p = 1$. Moreover, this class satisfies
$\ind(\alpha_p)=\ind(\alpha)_p$ and $(\ind(\alpha-\alpha_p))_p = 1$.  
(see \cite{Pie}, \S14.4, Primary Decomposition Theorem and its proof). 
Given a division algebra $D$, let $D_p$ be the division algebra in the class $[D]_p$; i.e.\ $[D]_p=[D_p]$.

\begin{prop} \label{determined_by_ram_prop}
Let $F$ be a field, $\Omega$ a nonempty set of discrete valuations on $F$ with
$\Br_u(F)' = 0$, and suppose $\alpha \in \Br(F)$. Then for every prime $p$ not equal to any residue characteristic of $\Omega$,
$\alpha_p$ is determined by ramification.
\end{prop}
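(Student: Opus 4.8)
The plan is to reduce to the ``period $p$'' piece of $\alpha_p$, apply the hypothesis $\Br_u(F)'=0$ there, and then propagate the conclusion back up to $\alpha_p$ using only that each $\ram_v$ is a group homomorphism, so that the period of an image divides that of its source.

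First I would set $p^m = \per(\alpha_p)$ and note that, since this period is a power of $p$ and $p$ is not a residue characteristic of any $v\in\Omega$, the class $\alpha_p$ lies in $\Br(F)'$; hence $\ram_v(\alpha_p)$ is defined for every $v\in\Omega$, and the assertion ``$\alpha_p$ is determined by ramification'' even makes sense. If $m=0$ then $\alpha_p=0$ and, as $\Omega\neq\emptyset$, one has $\per(\alpha_p)=1=\per(\ram_v\alpha_p)$ for any $v$; so I would assume $m\geq 1$ from now on.

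The main step is to consider $\gamma := p^{m-1}\alpha_p$. Since $\per(\alpha_p)=p^m$, the class $\gamma$ has period exactly $p$; in particular $\gamma\neq 0$, and $\gamma\in\Br(F)'$ as well. Because $\Br_u(F)'=0$, the class $\gamma$ is not in the kernel of $\prod_{v\in\Omega}\ram_v$, so $\ram_v\gamma\neq 0$ for some $v\in\Omega$. Fixing such a $v$ and using that $\ram_v$ is a homomorphism, I get $p^{m-1}\ram_v(\alpha_p)=\ram_v(\gamma)\neq 0$, so $\per(\ram_v\alpha_p)$ does not divide $p^{m-1}$; but $\per(\ram_v\alpha_p)$ does divide $\per(\alpha_p)=p^m$, and the only divisor of $p^m$ not dividing $p^{m-1}$ is $p^m$ itself. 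Hence $\per(\ram_v\alpha_p)=p^m=\per(\alpha_p)$, which is precisely the statement that $\alpha_p$ is determined by ramification with respect to $\Omega$.

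I do not expect a genuine obstacle here: the argument is essentially bookkeeping, and the only points that need care are (i) verifying that $\alpha_p$ and $\gamma$ lie in $\Br(F)'$ so that the ramification maps apply — this is exactly where the hypothesis that $p$ avoids the residue characteristics of $\Omega$ is used — and (ii) the degenerate case $\alpha_p=0$, which uses $\Omega\neq\emptyset$. The one small idea is the observation that detecting nontriviality of the period-$p$ element $p^{m-1}\alpha_p$ through a single $\ram_v$ automatically forces $\ram_v\alpha_p$ to have the full period $p^m$.
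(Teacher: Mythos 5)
Your proof is correct and rests on the same mechanism as the paper's: the hypothesis $\Br_u(F)'=0$ makes $\prod_v\ram_v$ injective on $\Br(F)'$, and the $p$-power order bookkeeping forces some single $\ram_v(\alpha_p)$ to carry the full period. Your detour through the period-$p$ element $p^{m-1}\alpha_p$ is just an unwound version of the paper's remark that the order of the image in the product is the maximum of the componentwise orders, so this is essentially the same argument.
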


\begin{proof}
By definition, $\alpha_p\in \Br(F)'$ for $p$ as in the statement.
By the hypothesis on the unramified Brauer
group, the map 
\[
\xymatrix{
\Br(F)' \ar[rr]^-{\prod \ram_v} & &
\prod_{v \in \Omega} H^1(k_v, \Q/\Z)'}
\]
is injective. It follows that the image of $\alpha_p$ under the map $\prod\ram_v$
has order equal to $\per(\alpha_p)=\per(\alpha)_p$, a power of $p$.  Since the least common multiple of $p$-powers 
is their maximum, the order of the image is the maximum value of
$\per(\ram_v(\alpha_p))$, for $v \in \Omega$. Hence $\per(\alpha_p) =
\per(\ram_v(\alpha_p))$ for some $v \in \Omega$.  So $\alpha_p$ is determined by
ramification. 
\end{proof}

\section{Admissibility Criteria} \label{sec_admis}
In this section, we give a criterion for a group to be admissible.  It is based on an auxiliary lemma together with
an analysis of the ramification map defined in Section~\ref{generalities}.  
In Theorem~\ref{ram_main_thm}, we give a ramification condition on a crossed product algebra that implies that the Sylow subgroups 
of the associated group are metacyclic, or even abelian of rank at most two. We then describe a situation in which this condition
is met (Proposition~\ref{main_forward}), 
thereby providing the forward direction of the main result, Theorem~\ref{main_thm}. 
Finally, we state some corollaries for (retract) 
rational varieties.

We begin with the auxiliary lemma based on Kummer theory.

\begin{lem} \label{ram_abelian_lem}
Let $E/F$ be a finite Galois extension of complete discretely valued fields,
with Galois group $G$ and with cyclic residue field extension $\ell/k$ of characteristic not dividing $|G|$.
Then:
\renewcommand{\theenumi}{\alph{enumi}}
\begin{enumerate}
\item \label{metacyclic}
The Galois group $G$ is metacyclic.
\item \label{abelrk2}
Let $e$ be the ramification index of $E$ over $F$. Then $G$ is abelian (of
rank at most~2) if and only if $F$ contains a primitive $e$-th root of unity.
\end{enumerate}
\end{lem}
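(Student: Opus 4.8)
The plan is to run everything through the structure theory of tamely ramified extensions of complete discretely valued fields. Since $\cha(k) \nmid |G|$, the extension $E/F$ is tame, so I would start by introducing the inertia subgroup $I \trianglelefteq G$ and recording the two standard facts: $G/I \cong \Gal(\ell/k)$, which is cyclic by hypothesis, and $I$ is cyclic of order $e$ (tame inertia is cyclic, and for complete discretely valued fields the fundamental identity $ef = |G|$ holds, so $|I| = e$; note also that $\cha(k)\nmid|G|$ already forces $\ell/k$ to be separable, hence Galois). Part~(\ref{metacyclic}) then falls out immediately: $G$ is an extension of the cyclic group $G/I$ by the cyclic normal subgroup $I$, which is exactly the definition of metacyclic.

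For part~(\ref{abelrk2}) the heart of the matter is the conjugation action of $G/I$ on $I$. Here I would invoke the tame character, which gives a $\Z[G/I]$-module isomorphism $I \xrightarrow{\sim} \mu_e(\ell)$, the group of $e$-th roots of unity in $\ell$, with $G/I \cong \Gal(\ell/k)$ acting through its natural action on $\ell$. (Comparing orders shows in passing that $\mu_e \subseteq \ell$; one can also see this, and indeed run this whole step concretely, by writing $E = E^I(\pi^{1/e})$ for a uniformizer $\pi$ of $F$ and computing the commutator $\tau\sigma\tau^{-1}$ directly.) Now comes the one step that requires care: \emph{because $G/I$ is cyclic}, $G$ is abelian if and only if $I$ is central in $G$, equivalently if and only if the conjugation action of $G/I$ on $I$ is trivial. (Cyclicity of $G/I$ is genuinely needed here — the statement fails for general quotients, as $Q_8$ shows.) By the module isomorphism, triviality of the action is equivalent to $\Gal(\ell/k)$ fixing $\mu_e$ pointwise, i.e.\ to $\mu_e \subseteq k$.

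It then remains to transfer this condition from $k$ to $F$: since $\cha(k) \nmid e$ and $F$ is complete, reduction induces an isomorphism from the group of $e$-th roots of unity in $F$ onto that in $k$ (Hensel's lemma), so $\mu_e \subseteq k$ if and only if $F$ contains a primitive $e$-th root of unity. Finally, when $G$ is abelian it is generated by lifts of a generator of $G/I$ and a generator of $I$, hence by two elements, so it has rank at most $2$, which disposes of the parenthetical claim in~(\ref{abelrk2}). The only real obstacle, such as it is, is bookkeeping with the standard facts about tame ramification — in particular pinning down the $G/I$-equivariance of the tame character, so that the conjugation action on $I$ is correctly identified with the cyclotomic action of $\Gal(\ell/k)$ on $\mu_e$; after that the argument is formal group theory plus Hensel's lemma.
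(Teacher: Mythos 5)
Your proof is correct, and its skeleton is the same as the paper's: both split $E/F$ at the maximal unramified subextension $E_0 = E^I$, observe that $\Gal(E_0/F)\cong\Gal(\ell/k)$ is cyclic and that the (tame) inertia group $I=\Gal(E/E_0)$ is cyclic of order $e$, giving metacyclicity, and both reduce part~(b) to whether the conjugation action of $G/I$ on $I$ is trivial. The difference is in how that last equivalence is established. You cite the $\Gal(\ell/k)$-equivariant tame character $I\xrightarrow{\sim}\mu_e(\ell)$, so that triviality of the action becomes $\mu_e\subseteq k$, and then transfer to $F$ by Hensel; this is clean, and your observation that cyclicity of $G/I$ is what lets you pass from ``$I$ central'' to ``$G$ abelian'' (the $G/Z(G)$-cyclic trick, with $Q_8$ showing the hypothesis is needed) is exactly the right point to isolate. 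The paper instead proves the equivariance by hand: it writes $E=E_0(z)$ with $z^e=f\in E_0$, shows $\zeta\in E_0$ by Hensel, and uses a valuation computation ($v(f)=jv(f)+ev(r)$ with $v(f)$ prime to $e$, forcing $j=1$) to derive the relation $\tau(\zeta)^i=\zeta$ linking $\tau^{-1}\sigma\tau=\sigma^i$ to the Galois action on $\zeta$, concluding $\zeta\in F\iff i=1$. So your version buys brevity and conceptual clarity at the cost of relying on the stated equivariance of the tame character (Serre, \emph{Local Fields}, IV.2), which is precisely the fact the paper's explicit Kummer/valuation argument reproves inline; the paper also keeps the root of unity in $E_0$ rather than descending to the residue field, which makes its biconditional slightly more self-contained. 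Either route is complete; just make sure, if you keep the citation, that the reference you use really gives the equivariance for the full decomposition group acting on $I$, since that is the only non-formal input.
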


\begin{proof}
(\ref{metacyclic})
Let $E_0$ be the maximal unramified extension of $F$ contained
in $E$.  Thus $E/E_0$ is Galois and totally ramified (\cite{Serre:LF}, Corollary ~III.5.3), 
and it is tamely ramified since $\cha(k) \,{\not |}\, [E:E_0]$. 
Hence its Galois group is equal to its inertia group, which is cyclic (\cite{Serre:LF}, Corollaries~2 and~4 of IV.2). Note that $\ell$ is the residue field of $E_0$.

Since the residue field extension $\ell/k$ is cyclic, Proposition~I.7.20 and Corollary~II.3.4 of \cite{Serre:LF}
imply that the unramified extension
$E_0/F$ is also cyclic. Thus $E/F$ is metacyclic. 

(\ref{abelrk2})
We first show that $E_0$ always contains a primitive $e$-th root of unity.
Since the tamely ramified extension $E/E_0$ is totally ramified, its degree is equal to its ramification index $e$.  By \cite{hasse}, Chapter~16, p.~249, $E/E_0$ is a radical extension $E=E_0(z)$ for some $z\in E$ with $f:=z^e \in E_0$.  The conjugates of $z$ over $E_0$ are just the multiples of $z$ by the primitive $e$-th roots of unity in a fixed algebraic closure of $E$.  But $E/E_0$ is Galois; so these conjugates and hence these roots of unity lie in $E$.  Thus there is a primitive $e$-th root of unity in the residue field of $E$, or equivalently of $E_0$ (since $E/E_0$ is totally ramified).  By Hensel's Lemma, there is a primitive $e$-th root of unity $\zeta$ in $E_0$.

By the first part of the proof, the extensions $E/E_0$ and $E_0/F$ are cyclic.  Let $\sigma$ be a generator of $\Gal(E/E_0)$ for which
$\sigma(z)=\zeta z$, and let $\tau \in G$ be a lift of a generator of  
$\Gal(E_0/F)$.  Thus $G$ is abelian if and only if $\sigma$ and $\tau$ commute; and it suffices to show that this latter condition is equivalent to the assertion that $\zeta \in F$.   

Since $E_0/F$ is Galois, $\<\sigma\>$ is normal, and thus $\tau^{-1}\sigma\tau = \sigma^i$  
for some $i$ prime to $e$ with $1 \le i < e$. Moreover $\tau(z)^e=\tau(f) \in E_0$,
so $\sigma(\tau(z))= \zeta^j \tau(z)$ for some non-negative integer $j<e$. 
Hence $r := \tau(z)/z^j$ is fixed by $\sigma$; so $r \in E_0$. 

Let $v$ denote the discrete valuation on $F$, 
normalized so that the valuation of a uniformizer is $1$. 
We may extend $v$ to a discrete valuation on $E$, taking values in $\frac1e \Z$.
Since $\tau$ maps the maximal
ideal of the valuation ring of $E$ to itself, it follows that $v(\tau(h)) = v(h)$
for $h \in E$. Also, $v$ takes integral values on $E_0$, because $E_0$ is
unramified over $F$. 

In particular, $v(f),v(r) \in \Z$.  
We have $v(f) = v(z^e) = v(\tau(z)^e) = v(z^{je}r^e) = v(f^jr^e) = jv(f) + ev(r)
\in jv(f) + e\Z$.  But $v(f)$ is prime to $e$ since $E/E_0$ is totally
ramified.  So $j=1$, i.e. $\sigma(\tau(z))=\zeta \tau(z)$.  It follows that 
$\tau(\zeta)^i\tau(z) = \tau(\zeta^i z) = \tau \sigma^i(z) = \sigma\tau(z) =\zeta \tau(z)$, where the
third equality uses that $\tau^{-1}\sigma\tau = \sigma^i$. That is, $\tau(\zeta)^i=\zeta$. Thus
$\zeta \in F $ if and only if $i=1$; i.e.\ if and only if $\sigma\tau=\tau \sigma$.
\end{proof}

The next proposition makes it possible to verify the cyclicity of the residue field extension required in the previous lemma, in certain cases when the image of a Brauer class under the ramification map becomes trivial upon an extension $E/F$.  It uses
the following connection between 1-cocycles and cyclic extensions:

Consider a field $k$, with fixed separable closure $k^\s$ and absolute Galois
group $G_k := \Gal(k^\s/k)$.  An element $\psi \in H^1(k, \Q/\Z) = \Hom(G_k, \Q/\Z)$
with $\per(\psi) = n$ defines an $n$-cyclic field extension $k' := \left(k^\s\right)^{\ker \psi}$ of $k$.  Let $\ell$ be an algebraic field
extension of $k$, viewed as a subfield of an algebraic closure of $k$ containing $k^\s$.  Then there is a natural inclusion of $G_\ell$ into $G_k$ and hence a restriction map $\Hom(G_k, \Q/\Z) \to \Hom(G_\ell, \Q/\Z)$. By Galois theory the image of $\psi$ under this map defines the compositum 
$k'\ell/\ell$ as a cyclic extension of $\ell$.

\begin{prop}\label{ram_split_prop}
Let $(E,w)/(F,v)$ be an extension of discretely valued fields and
let $\ell/k$ denote the residue field extension.  
Let $\alpha \in \Br(F)'$ (with respect to $\{v\}$),
and suppose that $\per(\ram_v(\alpha)) = [E:F]$.
Let $\alpha_E \in \Br(E)'$ be the element induced by $\alpha$.
If $\ram_w(\alpha_{E}) =0$ then $\ell$ is cyclic over $k$ and 
$w$ is the only discrete valuation on $E$ extending $v$.
\end{prop}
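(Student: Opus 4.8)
The plan is to run a single numerical squeeze, fed by the functoriality of the ramification map together with the fundamental inequality for extensions of valuations. Write $\psi := \ram_v(\alpha) \in H^1(k,\Q/\Z)'$ and $n := [E:F]$, so that $\per(\psi) = n$ by hypothesis, and let $k' := (k^\s)^{\ker\psi}$ be the associated $n$-cyclic extension of $k$. I would first record a fact I will need at the end: since $n = \per(\psi)$ is prime to $\cha(k)$, and $\cha(k) = \cha(F)$ whenever $\cha(F) > 0$ (the valuation ring of $F$ surjects onto $k$), the integer $n$ is prime to $\cha(F)$, so $E/F$ is \emph{separable}.

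Next I would invoke the standard compatibility of ramification maps under the extension $(E,w)/(F,v)$ (\cite{Sa:LN}, Chapter~10): with $e := e(w/v)$ and residue extension $\ell/k$, the square
\[
\xymatrix{
\Br(F)' \ar[r]^-{\ram_v} \ar[d] & H^1(k,\Q/\Z)' \ar[d]^{e\cdot\res_{\ell/k}} \\
\Br(E)' \ar[r]^-{\ram_w} & H^1(\ell,\Q/\Z)'
}
\]
commutes, the left vertical map being base change and the right one being $e$ times the restriction $H^1(k,\Q/\Z)'\to H^1(\ell,\Q/\Z)'$. (This reduces, via the completions $F_v, E_w$, to the complete case, where $e(E_w/F_v)=e(w/v)$ and the residue extension is still $\ell/k$.) Thus the hypothesis $\ram_w(\alpha_E)=0$ becomes $e\cdot\res_{\ell/k}(\psi)=0$, i.e.\ $\per(\res_{\ell/k}\psi)\mid e$. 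By the discussion preceding the proposition, $\res_{\ell/k}(\psi)$ defines the cyclic extension $k'\ell/\ell$, so $[k'\ell:\ell]=\per(\res_{\ell/k}\psi)$; and since $k'/k$ is Galois, $[k'\ell:\ell]=[k':k'\cap\ell]=n/[k'\cap\ell:k]$. Therefore $n/[k'\cap\ell:k]\mid e$, which yields
\[
n \;\le\; e\cdot[k'\cap\ell:k] \;\le\; e\cdot[\ell:k] \;=\; e\,f,
\qquad f:=f(w/v)=[\ell:k].
\]

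Now I would apply the fundamental inequality $\sum_{w'\mid v} e(w'/v)f(w'/v)\le [E:F]=n$; in particular $ef\le n$. Combining with the displayed chain forces every inequality there to be an equality: $ef=n$ and $[k'\cap\ell:k]=[\ell:k]$. The latter gives $k'\cap\ell=\ell$, i.e.\ $\ell\subseteq k'$, and since $k'/k$ is cyclic, its subextension $\ell/k$ is cyclic — the first assertion. For the second, separability of $E/F$ makes the fundamental inequality an equality, $\sum_{w'\mid v} e(w'/v)f(w'/v)=n$, and we have just shown that one summand (for our $w$) already equals $n$; since each summand is at least $1$, there is no other valuation extending $v$.

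The one point requiring care is pinning down the functoriality square with exactly the factor $e$ on the restriction side (and checking it passes correctly between $(E,w)/(F,v)$ and the completions); everything after that is the short squeeze above. The only other thing easy to overlook is that the separability of $E/F$ — needed to upgrade the fundamental inequality to an equality, and hence to deduce uniqueness of $w$ — is automatic here because tameness ($\cha(k)\nmid n$) is already baked into the hypothesis $\per(\ram_v\alpha)=[E:F]$.
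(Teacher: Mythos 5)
Your proof is correct and follows essentially the same route as the paper's: the same commutative square with the $e\cdot\res$ map, the reduction of the hypothesis to $e\cdot\res(\psi)=0$, and the same squeeze $n\le e\,[\ell:k]\le n$ against the fundamental inequality, yielding cyclicity of $\ell/k$ and $ef=n$. The only (harmless) differences are that you attach the cyclic extension to $\psi$ rather than to $e\psi$, and that you invoke separability of $E/F$ to upgrade the fundamental inequality to an equality for the uniqueness of $w$ --- which is unnecessary, since the inequality $\sum e_{w'}f_{w'}\le n$ together with your summand for $w$ already equalling $n$ rules out any other extension, exactly as in the paper.
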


\begin{proof}
Let $\chi :=  \ram_v(\alpha) \in H^1(k,\Q/\Z)'$ and let $e:=e_{w/v}$
be the ramification index of $w$ over $v$.
By \cite{Sa:LN}, Theorem 10.4, there is a commutative diagram
\[\xymatrix{
\Br(F)' \ar[rr]^-{\ram_v} \ar[d] & & H^1(k, \Q/\Z)' \ar[d]^{e\cdot
\res} \\
\Br(E)' \ar[rr]_-{\ram_w} & & H^1(\ell, \Q/\Z)'
}\]
where the vertical map on the
right is the composition of the restriction map with multiplication by $e$.

Hence $e \res (\chi) =\ram_w(\alpha_E)= 0$.
Let $k'$ be the cyclic extension of $k$ defined by $e\chi$. 
Then since $\res(e\chi)=e\res(\chi)=0$,
$k'\ell/\ell$ is a trivial extension, i.e. $k'\subseteq \ell$.
Moreover, $[k':k]=\per(e\chi)\geq \per(\chi)/e$, and thus
$$[E:F]=\per(\chi)\leq [k':k]\cdot e\leq [\ell:k]\cdot e\leq [E:F].$$
So the chain of inequalities is actually a chain of equalities and hence
$\ell=k'$ is cyclic and $[E:F]=[\ell:k]\cdot e$.  Hence $w$ is the only 
discrete valuation on $E$ lying over $v$.
\end{proof}

Recall the notation $n_p$ and $\alpha_p$ introduced before
Proposition~\ref{determined_by_ram_prop}.

\begin{thm} \label{ram_main_thm}
Let $F$ be a field, $\Omega$ a set of discrete valuations on $F$, and $p$ a
prime unequal to all of the residue characteristics of $\Omega$.   
Let $D$ be a crossed product division algebra over $F$ with respect to a
finite group $G$. 
If $[D]_p \in \Br(F)'$ is determined by ramification
and $(\per D)_p = (\ind D)_p$, then every $p$-Sylow subgroup $P$ of $G$ is
metacyclic.  If moreover 
$F$ contains a primitive $|P|$-th root of unity, then $P$ is abelian of rank
at most $2$. 
\end{thm}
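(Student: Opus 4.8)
The plan is to reduce to a complete discretely valued situation in which Lemma~\ref{ram_abelian_lem} applies, with the cyclicity of the relevant residue field extension supplied by Proposition~\ref{ram_split_prop}. Write $p^m := |P|$. Since $D = \cross{E}{G}{c}$ is a crossed product division algebra, $E/F$ is a $G$-Galois maximal subfield, so $\ind D = \deg D = [E:F] = |G|$ and hence $\per([D]_p) = (\per D)_p = (\ind D)_p = |G|_p = p^m$. Because $[D]_p$ is determined by ramification, fix $v \in \Omega$ with $\per(\ram_v([D]_p)) = p^m$; by the hypothesis on $p$ we have $\cha(k_v) \neq p$.

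First I would complete at $v$ and set $\hat\alpha := [D]_p \otimes_F F_v \in \Br(F_v)'$. Since $\ram_v$ factors through $\Br(F_v)'$ we have $\per(\ram_v(\hat\alpha)) = p^m$, and as $\ram_v$ and the restriction $\Br(F)' \to \Br(F_v)'$ are homomorphisms, $p^m \mid \per(\hat\alpha) \mid \per([D]_p) = p^m$, so $\per(\hat\alpha) = p^m$. Writing $E \otimes_F F_v = \prod_{w \mid v} E_w$, each $E_w/F_v$ is Galois with group the decomposition group $D_w \leq G$, and since $E$ splits $[D]$ and hence $[D]_p$, each factor $E_w$ splits $\hat\alpha$; thus $p^m = \per(\hat\alpha) \mid \ind(\hat\alpha) \mid [E_w : F_v] = |D_w|$. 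Fix one such $w$. Then $|D_w|_p = p^m$, so a $p$-Sylow subgroup of $D_w$ is a $p$-Sylow subgroup of $G$. Replacing $P$ by a $G$-conjugate, which changes neither metacyclicity, nor being abelian of rank at most $2$, nor $|P|$, we may assume $P \leq D_w$.

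Next I would descend along $L := (E_w)^P$, a complete discretely valued field with residue field $k_L$, where $\cha(k_L) = \cha(k_v) \neq p$ and $[L : F_v] = [D_w : P]$ is prime to $p$. By Galois theory $E_w/L$ is $P$-Galois of degree $p^m$, and $E_w$ splits $\hat\alpha_L := \hat\alpha \otimes_{F_v} L$. To invoke Proposition~\ref{ram_split_prop} I would verify that the ramification of $\hat\alpha_L$ at the valuation of $L$ has period $[E_w : L] = p^m$: by the commutative diagram of \cite{Sa:LN}, Theorem~10.4, this ramification is obtained from $\ram_v(\hat\alpha)$ by restricting along $k_v \hookrightarrow k_L$ and multiplying by the ramification index of $L/F_v$; since $[L : F_v]$ is prime to $p$, both that ramification index and $[k_L : k_v]$ are prime to $p$, so both operations are injective on the $p$-primary subgroup of $H^1(k_v, \Q/\Z)$, and the period in question equals $\per(\ram_v(\hat\alpha)) = p^m$. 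As $E_w$ splits $\hat\alpha_L$, the class it induces on $E_w$ has trivial ramification, so Proposition~\ref{ram_split_prop} applies and shows that the residue field extension $\ell_w/k_L$ of $E_w/L$ is cyclic.

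Finally I would apply Lemma~\ref{ram_abelian_lem} to the $P$-Galois extension $E_w/L$ of complete discretely valued fields: $\ell_w/k_L$ is cyclic and $\cha(k_L) \neq p$ does not divide $|P|$, so part~(\ref{metacyclic}) shows $P$ is metacyclic. If in addition $F$ contains a primitive $|P|$-th root of unity, then so does $L \supseteq F$; since the ramification index $e$ of $E_w/L$ divides $[E_w : L] = |P|$, $L$ then also contains a primitive $e$-th root of unity, so part~(\ref{abelrk2}) shows $P$ is abelian of rank at most $2$. The principal obstacle is the second paragraph: one cannot argue with $D$ over the incomplete field $F$, and $E^P$ need not be Galois over $F$; the point is that completing at the ramification-detecting place $v$ forces the decomposition group $D_w$ of a place $w \mid v$ of $E$ to contain a full $p$-Sylow subgroup $P$ of $G$, making $E_w$ a $P$-Galois extension of $L = (E_w)^P$, while the prime-to-$p$ degree $[L:F_v]$ prevents any loss of ramification data in the descent from $F_v$ to $L$.
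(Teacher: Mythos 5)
Your proof is correct and follows essentially the same route as the paper: reduce to a $P$-Galois extension of complete discretely valued fields, establish cyclicity of the residue field extension via Proposition~\ref{ram_split_prop}, and conclude with Lemma~\ref{ram_abelian_lem}. The only (harmless) difference is organizational: you complete at $v$ first and use $\per \mid \ind \mid [E_w:F_v]$ over $F_v$ to see that a decomposition group contains a full Sylow $p$-subgroup, whereas the paper works with the fixed field $L^P$ over $F$, chooses a place $w$ above $v$ with $e_{w/v}f_{w/v}$ prime to $p$, and extracts the same fact from the uniqueness assertion in Proposition~\ref{ram_split_prop}.
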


\begin{proof} 
By hypothesis there is a discrete valuation $v \in \Omega$ such that 
$\per([D]_p) = \per(\ram_v[D]_p)$.
Since $D$ is a crossed product with respect to $G$, 
there is a maximal subfield
$L$ of $D$ that is $G$-Galois over $F$.  Moreover $D$ is split by $L$
(\cite{Sa:LN}, Corollary~7.3), i.e.\ $D_L := D \otimes_F L$ is split.

Let $P$ be a $p$-Sylow subgroup of $G$, and let $w_1,\ldots,w_n$ denote the
extensions of $v$ to the fixed field $L^P$. Then $[L^P:F]=\sum\limits_{i=1}^n
e_{w_i/v}f_{w_i/v}$ where $e_{w_i/v}$ (resp. $f_{w_i/v}$) is the ramification
index (resp. residue degree) of $w_i$ over $v$. Since $p$ does not divide
$[L^P:F]$, there exists an index $j$ for which $p\nmid e_{w_j/v}f_{w_j/v}$. Let
  $w:=w_j$, and let $u$ be any discrete valuation on $L$ extending
  $w$.
 
Let $\beta$ be the class $[D \otimes_F L^P]$ in $\Br(L^P)$. 
In order to invoke Proposition~\ref{ram_split_prop}, we 
will show that the following three conditions are satisfied:
\renewcommand{\theenumi}{\roman{enumi}}
\begin{enumerate}
\item \label{first} $\beta \in \Br(L^P)'$ (with respect to $\{w\}$);
\item \label{second} $\per (\ram_w \beta) = |P|$;
\item \label{third} $\ram_u (\beta_{L}) = 0$.
\end{enumerate}

For (\ref{first}), the algebra $D$ is split by $L$; hence 
the class $\beta=[D\otimes_F L^P]$ 
is split by $L$, which is of
degree $|P|$ over $L^P$. Consequently, 
$\per \beta \mid \ind \beta \mid |P|$. Hence $\per \beta$ is a power of $p$,
which is not the residue
characteristic of $w$. 
Thus
$\beta \in \Br(L^P)'$ as we wanted to show.

For (\ref{second}), consider the commutative diagram (\cite{Sa:LN}, Theorem
10.4) 
\[\xymatrix{
\Br(F)' \ar[rr]^-{\ram_v} \ar[d]^\phi & & H^1(k, \Q/\Z)' \ar[d]^{e_{w/v}\cdot
\res} \\
\Br(L^P)' \ar[rr]_-{\ram_w} & & H^1(\ell, \Q/\Z)'
}\]
where $k$ and $\ell$ denote the respective residue fields.
The vertical right hand map in the diagram is the composition of multiplication by $e_{w/v}$ and the restriction map.  Since $e_{w/v}$ is prime to $p$, multiplication by $e_{w/v}$ is injective on $p$-power torsion.  Also, the degree of $\ell/k$ is prime to $p$, and the composition of restriction and corestriction is multiplication by that degree.  So the restriction map is also injective on $p$-power torsion.  Hence the 
vertical right hand map in the above diagram is injective on $p$-power torsion.

Now $[D_p] = [D]_p = r[D]$ for some $r \in \N$ that is prime to $p$, and so $[D_p \otimes_{F} L^P] = r\beta$.
Thus $[D]_p$ is sent to $r\beta$ under the left hand vertical map   $\phi$.  
Since
$\per \beta$ is a power of $p$, so is $\per(\ram_w \beta)$, and hence
\[\per(\ram_w \beta) = \per(\ram_w r\beta)
 =\per(\ram_w(\phi([D]_p))=\per(\ram_v([D]_p)
.\]  
Recall that $D$ is a crossed product with respect to $G$, 
so $\ind(D)_p=|G|_p=|P|$, and this quantity equals $\per(D)_p$ by hypothesis. 
Thus, using again that $[D]_p$ is determined by ramification at $v$, 
$\per(\ram_w \beta) = \per(\ram_v [D]_p) = (\per D)_p =|P|$, showing (\ref{second}).  

Finally, (\ref{third}) is immediate since $\ram_u(\beta_{L})=\ram_u([D
\otimes_F L])=0$ because $D$ splits over $L$.

Now Proposition~\ref{ram_split_prop} (applied to the $P$-Galois extension $L/L^P$) implies that the residue field extension $\ell/k$ 
is cyclic and that $u$ is the unique discrete valuation on $L$ extending $w$ on $L^P$.  Hence the extension $L_u/(L^P)_w$ of the corresponding completions also has Galois group $P$, and the
conclusions of the theorem follow immediately from Lemma~\ref{ram_abelian_lem}.
\end{proof}

\begin{cor} \label{ram_cor}
Let $F$ be a field and let $\Omega$ be a nonempty set of discrete valuations
on $F$ with $\Br_u(F)' = 0$.  Let $p$ be a prime unequal to all of the residue
characteristics of $\Omega$, and assume that $\per \alpha_p = \ind \alpha_p$
for every $\alpha \in \Br(F)$.  If $G$ is admissible over $F$, then every
Sylow $p$-subgroup $P$ is metacyclic.  If moreover $F$ contains a primitive
$|P|$-th root of unity, then $P$ is abelian of rank at most two. 
\end{cor}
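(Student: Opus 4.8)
The proof of Corollary~\ref{ram_cor} is essentially a matter of assembling the pieces already in place. The plan is to reduce the admissibility statement to an application of Theorem~\ref{ram_main_thm}, using Proposition~\ref{determined_by_ram_prop} to supply the hypothesis that $[D]_p$ is determined by ramification.

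First I would unwind the definition of admissibility. If $G$ is admissible over $F$, then by the discussion in Section~\ref{generalities} there is an $F$-division algebra $D$ that is a crossed product with respect to $G$; in particular $D$ has a maximal subfield $L$ that is $G$-Galois over $F$, so $\deg D = [L:F] = |G|$ and $\ind D = |G|$. Next I would invoke Proposition~\ref{determined_by_ram_prop}: since $\Omega$ is nonempty with $\Br_u(F)' = 0$ and $p$ is unequal to all residue characteristics of $\Omega$, the class $[D]_p$ lies in $\Br(F)'$ and is determined by ramification with respect to $\Omega$. The hypothesis $\per\alpha_p = \ind\alpha_p$ for all $\alpha \in \Br(F)$, applied to $\alpha = [D]$, gives $(\per D)_p = \per([D]_p) = \ind([D]_p) = (\ind D)_p$. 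Thus both hypotheses of Theorem~\ref{ram_main_thm} are met for this $D$, $G$, $p$, and $v$.

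Applying Theorem~\ref{ram_main_thm} then yields directly that every $p$-Sylow subgroup $P$ of $G$ is metacyclic, and that if $F$ contains a primitive $|P|$-th root of unity then $P$ is abelian of rank at most $2$. This completes the proof.

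There is really no substantive obstacle here: the corollary is a packaging of Theorem~\ref{ram_main_thm} with Proposition~\ref{determined_by_ram_prop}, and the only point requiring a moment's care is checking that the index-equals-period hypothesis is being applied to the right Brauer class — namely that $\ind([D]_p) = (\ind D)_p$ and $\per([D]_p) = (\per D)_p$, which are exactly the properties of the primary component recalled (from \cite{Pie}, \S14.4) just before Proposition~\ref{determined_by_ram_prop}. Everything else is immediate from the definitions.
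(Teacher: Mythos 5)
Your proposal is correct and follows essentially the same route as the paper's proof: produce the crossed product division algebra $D$ from admissibility, apply Proposition~\ref{determined_by_ram_prop} to see that $[D]_p$ is determined by ramification, use the period-index hypothesis on $\alpha=[D]$ together with the properties of the $p$-primary component to get $(\per D)_p=(\ind D)_p$, and conclude via Theorem~\ref{ram_main_thm}. No gaps.
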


\begin{proof}
Since $G$ is admissible over $F$, there is a crossed product division algebra
$D$ over $F$ with respect to $G$.  By Proposition~\ref{determined_by_ram_prop}
and the assumption on $\Br_u(F)'$, it follows that $[D]_p \in \Br(F)'$ is
determined by ramification.  By hypothesis, $(\per [D])_p = \per([D]_p) =
\ind([D]_p) = (\ind [D])_p$.  So the conclusion follows from
Theorem~\ref{ram_main_thm}. 
\end{proof}

Using this corollary, we obtain the following proposition, which provides a necessary condition for admissibility and implies the forward direction of our main result, Theorem~\ref{main_thm} below.
 
\begin{prop} \label{main_forward}
Let $K$ be a complete discretely valued field with algebraically closed 
residue field $k$, and let
$F$ be a finitely generated field extension of $K$ of transcendence degree one. 
Let $G$ be a finite group that is admissible over $F$. 
Then for every $p \ne \cha(k)$, the Sylow $p$-subgroups of $G$ are each abelian of rank at most two.
\end{prop}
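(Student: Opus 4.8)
The plan is to deduce this proposition from Corollary~\ref{ram_cor} by verifying its two hypotheses for a suitable set $\Omega$ of discrete valuations on $F$: namely, that $\Br_u(F)' = 0$ and that $\per\alpha_p = \ind\alpha_p$ for every $\alpha \in \Br(F)$ and every prime $p \ne \cha(k)$. The set $\Omega$ to use is the set of divisorial valuations on $F$ coming from codimension-one points of a regular model: choose a regular projective model $\mc{X}$ of $F$ over the valuation ring of $K$, with function field $F$, and let $\Omega$ be the valuations associated to the codimension-one points of $\mc{X}$ (both the vertical ones, supported on components of the closed fiber, and the horizontal ones). Since $k$ has characteristic $\cha(k)$, every residue field of a point of $\mc{X}$ has characteristic either $0$ or $\cha(k)$, so any prime $p \ne \cha(k)$ is indeed unequal to all residue characteristics of $\Omega$, and $\Br(F)'$ (with respect to $\Omega$) contains all prime-to-$\cha(k)$ torsion.

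First I would establish $\Br_u(F)' = 0$. This is where the geometric hypotheses enter: $F$ is the function field of a curve over $K$, a complete discretely valued field with algebraically closed residue field, so $F$ has cohomological dimension $2$ (away from $\cha(k)$), and one can appeal to the purity/injectivity results for Brauer groups of regular two-dimensional schemes — a class in $\Br(F)'$ unramified at every codimension-one point of $\mc{X}$ extends to a class in $\Br(\mc{X})'$, and $\Br(\mc{X})'$ injects into (or is controlled by) the Brauer group of the closed fiber, which is a curve over the algebraically closed field $k$ and hence has trivial prime-to-$\cha(k)$ Brauer group. More precisely, one uses that for $\mc{X}$ regular of dimension $2$ the sequence relating $\Br(\mc{X})$ to $\Br(F)$ and the ramification at codimension-one points is exact (Artin–Grothendieck type results, or the explicit analysis in the references on such function fields), together with the vanishing of the unramified Brauer group over the algebraically closed residue field. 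I would cite the standard references (e.g., the theory of Brauer groups of arithmetic surfaces, Saltman's work, or Grothendieck's ``Le groupe de Brauer'') for the statement that such an $F$ has $\Br_u(F)' = 0$ with respect to the divisorial valuations of a regular model.

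Next I would invoke the period-index equality: for $F$ as in the statement, every class $\alpha \in \Br(F)$ satisfies $\per\alpha_p = \ind\alpha_p$ for each $p \ne \cha(k)$. This is a known theorem for function fields of curves over complete discretely valued fields with algebraically closed (or more generally nice) residue field — it follows from the work establishing that such fields have the property that period equals index in the prime-to-$\cha(k)$ part of the Brauer group (one can cite the patching-based proof referenced in the introduction as \cite{HHK:uinv}, or Lieblich's and Saltman's results). With both hypotheses of Corollary~\ref{ram_cor} in hand, the corollary immediately gives that every Sylow $p$-subgroup $P$ of $G$ is metacyclic for each $p \ne \cha(k)$; and since $k$ is algebraically closed of characteristic $\ne p$, it contains all $p$-power roots of unity, hence so do $K$ and $F$, so in particular $F$ contains a primitive $|P|$-th root of unity. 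The ``moreover'' clause of Corollary~\ref{ram_cor} then upgrades ``metacyclic'' to ``abelian of rank at most two,'' completing the proof.

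The main obstacle I anticipate is the first step, verifying $\Br_u(F)' = 0$: one must be careful about the choice of model $\mc{X}$ (resolution of singularities for arithmetic surfaces over a complete DVR, available in this residue-characteristic-zero or mixed setting), about including both horizontal and vertical divisors in $\Omega$, and about correctly citing the purity and comparison results that let one pass from unramified-at-all-codimension-one-points to trivial. The period-index input, while deep, is a clean citation; the cohomological-dimension and unramified-Brauer-group bookkeeping is where the real work (or the real reliance on prior literature) lies.
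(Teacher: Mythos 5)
Your proposal follows essentially the same route as the paper: take $\Omega$ to be the divisorial valuations of a regular projective model $\wh X$ over the valuation ring of $K$, establish $\Br_u(F)'=0$ via purity for the regular surface $\wh X$ together with vanishing of its Brauer group (the paper cites \cite{COP}, Corollary~1.10(b) and \cite{grothbrauer2}, Proposition~2.3 for exactly this), quote the period-index theorem from \cite{HHK:uinv} or \cite{Lie:PI}, lift roots of unity from $k$ by Hensel's Lemma, and conclude by Corollary~\ref{ram_cor}. The argument is correct and matches the paper's proof step for step, differing only in that the paper pins down the precise references for the unramified Brauer group computation.
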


\begin{proof}
Let $p \ne\cha(k)$ be a prime that divides the order of $G$, and let $P$ be
a Sylow $p$-subgroup of $G$. Let $T$ denote the valuation ring of $K$, and choose
a regular connected projective $T$-curve $\wh X$ with function field  $F$.  
(Given $F$, such an $\wh X$ always exists by resolution of singularities; cf.\ \cite{Abh} or \cite{Lip}.)
Let $\Omega$ be the set of discrete valuations corresponding
to the codimension~$1$ points of $\wh X$.
Thus $p$ is not a residue characteristic of any valuation $v$ in $\Omega$.
With respect to $\Omega$ we have $\Br_u(F)' = 0$, by results in \cite{COP} and \cite{grothbrauer2}.  (Namely, $\Br(\wh X) = 0$ by \cite{COP}, Corollary~1.10(b) (Corollary~1.9(b) in the preprint), and then $\Br_u(F)' = \Br(\wh X)' = 0$ by Proposition~2.3 of \cite{grothbrauer2}.) 

Since $p \ne \cha(k)$, it follows by Theorem~5.5 of \cite{HHK:uinv} (or by
\cite{Lie:PI}, Theorem~5.3) that $\per \alpha_p = \ind \alpha_p$ for every $\alpha \in \Br(F)$.  Again since $p
\ne \cha(k)$, the algebraically closed residue field $k$ contains a primitive
$|P|$-th root of unity; thus by Hensel's Lemma, so does $K$ and hence $F$.
Thus the hypotheses of Corollary~\ref{ram_cor} hold, and therefore
each Sylow $p$-subgroup of $G$ is abelian of rank at most $2$.
\end{proof}

We conclude this section with two corollaries 
(which are not used in the remainder of the paper). 
They use Theorem~\ref{ram_main_thm} to give an obstruction for a
function field to be rational, or even retract rational (which is more general; see \cite{Sa:LN}, p.77).

\begin{cor} \label{retract_rational}
Let $F$ be a retract rational field extension of an algebraically closed field~$C$.  Suppose
that there is an $F$-division algebra $D$ with $\per D = \ind D$, such that $D$ is a crossed product for a group $G$ with $\cha(C)  \nmid |G|$.  
Then all Sylow subgroups of $G$ are abelian of rank at most $2$.
\end{cor}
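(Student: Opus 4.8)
The plan is to reduce Corollary~\ref{retract_rational} to Theorem~\ref{ram_main_thm} by producing a set of discrete valuations $\Omega$ on $F$ witnessing that $[D]_p$ is determined by ramification for each prime $p$ dividing $|G|$. Since $F$ is retract rational over the algebraically closed field $C$, I would first reduce to the case that $F$ is itself rational, i.e.\ $F = C(x_1,\dots,x_n)$ for some $n$: a retract rational extension is a retract (in the sense of being a direct factor up to stable equivalence) of a purely transcendental one, and the unramified Brauer group is a retract-rational invariant. Concretely, retract rationality gives maps $F \to L \to F$ with $L$ purely transcendental over $C$ and the composite the identity, so $\Br_u(F)'$ is a direct summand of $\Br_u(L)'$; hence it suffices to know $\Br_u(L)' = 0$ for $L = C(x_1,\dots,x_n)$ with respect to a suitable $\Omega$.

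Next I would establish $\Br_u(C(x_1,\dots,x_n))' = 0$ with respect to the collection $\Omega$ of divisorial valuations on $\P^n_C$ (or, iterating, on a tower of projective spaces). For $n = 1$ this is classical: over an algebraically closed field $C$, the Faddeev exact sequence shows that $\Br(C(x))'$ injects into $\prod_{v}H^1(k_v,\Q/\Z)'$ as $v$ runs over closed points of $\P^1_C$, so the unramified part is $0$ (indeed $\Br(\P^1_C) = \Br(C) = 0$). For general $n$ I would proceed by induction, viewing $C(x_1,\dots,x_n)$ as the function field of $\P^1$ over $C(x_1,\dots,x_{n-1})$ and using the compatibility of ramification maps with the residue-field valuations; a class unramified at all divisorial valuations of $\P^n_C$ restricts, at the generic point of a coordinate hyperplane, to an unramified class over $C(x_1,\dots,x_{n-1})$, which vanishes by induction, and then a residue argument at the remaining valuations kills the original class. (This is the standard fact that $\Br_u$ of a rational function field over an algebraically closed field is trivial; one may also simply cite it.) I would then transport this back to $F$ via the retract, obtaining $\Br_u(F)' = 0$ with respect to the induced valuation set $\Omega$, whose residue characteristics are all $\cha(C)$, hence none equals any $p \mid |G|$.

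With $\Omega$ in hand, the rest is a direct application of earlier results. By Proposition~\ref{determined_by_ram_prop}, for every prime $p \ne \cha(C)$ the class $[D]_p \in \Br(F)'$ is determined by ramification with respect to $\Omega$. The hypothesis $\per D = \ind D$ forces $(\per D)_p = (\ind D)_p$ for each such $p$ (taking $p$-primary parts of an equality of integers). Since $\cha(C) \nmid |G|$, the algebraically closed field $C$ contains a primitive $|P|$-th root of unity for each Sylow $p$-subgroup $P$ of $G$, hence so does $F$. Thus for each $p \mid |G|$ the hypotheses of Theorem~\ref{ram_main_thm} are met, and that theorem yields that every Sylow $p$-subgroup of $G$ is abelian of rank at most $2$; as this holds for all $p$ dividing $|G|$, the corollary follows.

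The main obstacle is the second step: verifying $\Br_u = 0$ for rational function fields over $C$ with respect to the divisorial valuations, and in particular making the retract-rationality reduction precise (that $\Br_u(F)'$ is a functorial retract-rational invariant, so that it is a direct summand of $\Br_u$ of the ambient purely transcendental field, and that the relevant $\Omega$ on $F$ can be taken compatibly). The arithmetic of period–index and roots of unity is immediate, and the invocation of Theorem~\ref{ram_main_thm} is formal once $\Omega$ is produced; everything hard is concentrated in controlling the unramified Brauer group under retract rationality and in the inductive Faddeev-sequence computation over $\P^n_C$.
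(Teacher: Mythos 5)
Your overall strategy coincides with the paper's: take $\Omega$ to be the discrete valuations on $F$ trivial on $C$, show $\Br_u(F)'=0$, and then apply Proposition~\ref{determined_by_ram_prop} and Theorem~\ref{ram_main_thm}. The last two steps of your argument (passing to $p$-parts of $\per=\ind$, roots of unity from $C$, invoking Theorem~\ref{ram_main_thm} for each $p\mid |G|$) are correct and are exactly what the paper does. The paper disposes of the crucial vanishing statement in one line by citing Proposition~11.8 of \cite{Sa:LN}, which states precisely that $\Br_u(F)'\cong\Br(C)'=0$ when $F$ is retract rational over $C$ and $\Omega$ is the set of valuations trivial on $C$.

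Your justification of that vanishing, however, contains a genuine error. Retract rationality does \emph{not} give field homomorphisms $F\to L\to F$ with $L$ purely transcendental over $C$ and composite the identity: field homomorphisms are injective, so if the composite were $\id_F$ the second map would be both injective and surjective, i.e.\ $L\cong F$, which would force $F$ itself to be rational. The retraction in the definition lives at the level of rings or varieties: there are a domain $S$ with $\frc(S)=F$, a localized polynomial ring $T$, and $C$-algebra maps $S\to T\to S$ composing to $\id_S$; the surjection $T\to S$ is not injective and hence induces no map of fraction fields $L\to F$. Consequently $\Br_u(F)'$ is not literally a direct summand of $\Br_u(L)'$ via restriction of field embeddings. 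To salvage your direct-summand idea one must work with Brauer groups of the schemes $\Spec(S)$ and $\Spec(T)$ (using purity to identify the unramified Brauer group with $\Br$ of a suitable regular model, so that unramified classes can be pulled back along the non-dominant section and specialized back), which is essentially the content of Saltman's Proposition~11.8 --- the piece your sketch is missing. Your inductive Faddeev computation that $\Br_u(C(x_1,\dots,x_n))'=0$ is fine as far as it goes, but without the corrected retract step it does not reach $F$.
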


\begin{proof}
Let $p$ be a prime that divides the order of $G$ and let $P$ be a Sylow $p$-subgroup.
Let $\Omega$ be the set of discrete valuations on $F$ that are trivial on $C$. 
By Proposition~11.8 of~\cite{Sa:LN},  $\Br_u(F)'$ is isomorphic to $\Br(C)'$, the subgroup of $\Br(C)$ consisting of elements of order not divisible by $\cha(C)$.  (Note that $\Omega=R_F$ in the notation in~\cite{Sa:LN}.)
But $\Br(C)$ and hence  $\Br(C)'$ is trivial, because $C$ is algebraically closed. 
Thus by
Proposition~\ref{determined_by_ram_prop}, $[D]_p \in \Br(F)'$ is determined by ramification. 
Since $C$ is algebraically closed of characteristic unequal to $p$,
it contains a primitive $p^r$-th root of unity for all $r \in \N$.  Hence so does $F$.
But $\per D = \ind D$ by assumption, so Theorem~\ref{ram_main_thm} implies the assertion.  
\end{proof}

\begin{rem}
This should be compared to a result of Saltman (\cite{Sal:MIF}), 
stating that if $G$ is a finite group with at least one Sylow subgroup that is not
abelian of rank at most $2$, then the center of a \textit{generic} crossed product algebra with group $G$ is not rational.
\end{rem}

In the case of a rational function field in two variables over an algebraically closed field, the condition 
on period and index is satisfied by \cite{dejong-per-ind}; so from Corollary~\ref{retract_rational} we obtain the following

\begin{cor}
Suppose $F = C(x, y)$ where $C$ is algebraically closed. If $G$ is
admissible over $F$ with $\cha(C) \nmid |G|$ then every Sylow subgroup of
$G$ is abelian of rank at most $2$.
\end{cor}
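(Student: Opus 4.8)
The plan is to invoke Corollary~\ref{retract_rational} with $C$ the given algebraically closed field and $F = C(x,y)$, which is certainly rational (hence retract rational) over $C$. To apply that corollary we need, for the division algebra $D$ underlying the admissible extension, the equality $\per D = \ind D$. Since $G$ is admissible over $F$, there is by definition a $G$-Galois field extension $E/F$ that is a maximal subfield of an $F$-division algebra $D$; as recalled in Section~\ref{generalities}, such a $D$ is then a crossed product with respect to $G$. So the only thing to check is the period-index hypothesis.

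The key input is \cite{dejong-per-ind}: for a field $F$ of transcendence degree $2$ over an algebraically closed field, period equals index in $\Br(F)$ (for classes of period prime to the characteristic, which is automatic here since $\cha(C) \nmid |G|$ forces the relevant part of $[D]$ to have period prime to $\cha(C)$). In particular this applies to $F = C(x,y)$ and to the class $[D]$, giving $\per D = \ind D$. First I would spell out that $\cha(C) \nmid |G|$, combined with the fact that $\ind D = \deg D = |G|$ for a crossed product division algebra with group $G$, implies $\cha(C) \nmid \ind D$ and hence $\cha(C) \nmid \per D$, so de Jong's theorem applies directly to $[D]$ without needing to pass to a prime-to-$\cha(C)$ component.

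With $\per D = \ind D$ in hand, Corollary~\ref{retract_rational} applies verbatim: $F$ is retract rational over the algebraically closed field $C$, $D$ is a crossed product for $G$ with $\cha(C) \nmid |G|$, and the period-index equality holds; the conclusion is exactly that every Sylow subgroup of $G$ is abelian of rank at most $2$. I do not anticipate a genuine obstacle here — the statement is essentially a specialization of the preceding corollary, and the entire content is the citation to \cite{dejong-per-ind} for the period-index equality over $C(x,y)$. The only point requiring a word of care is the bookkeeping with characteristic and the primary decomposition: one should note that $\per D$ and $\ind D$ are already prime to $\cha(C)$, so that the hypothesis ``$\per D = \ind D$'' of Corollary~\ref{retract_rational} is literally what de Jong's theorem provides, rather than merely an equality of the prime-to-$\cha(C)$ parts.
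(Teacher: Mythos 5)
Your proposal is correct and is exactly the paper's argument: the authors derive this corollary from Corollary~\ref{retract_rational} by citing \cite{dejong-per-ind} for the equality $\per D = \ind D$ over $F = C(x,y)$. Your extra remark that $\per D \mid \ind D = |G|$ is prime to $\cha(C)$, so de Jong's theorem applies to $[D]$ itself, is a sensible piece of bookkeeping that the paper leaves implicit.
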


\section{Admissibility and patching}\label{sec_patch}

To prove the converse direction of our main theorem, we first recall the
method of patching over fields as introduced in \cite{HH:FP}. Throughout this section, 
$T$ is a complete discrete valuation ring with uniformizer~$t$, fraction field $K$, and residue field $k$. 
We consider a finitely generated field extension $F/K$ of transcendence degree one.  Let $\wh X$ be a regular connected projective $T$-curve with function field $F$ such that the reduced irreducible components of its closed fiber 
$X$ are regular.
(Given $F$, such an $\wh X$ always exists by resolution of singularities; cf.~\cite{Abh} or \cite{Lip}.)
Let $f:\wh X \to\P^1_T$ be a finite morphism such that the inverse image $S$ of $\infty \in
\P^1_k$ contains all the points of $X$ at which distinct irreducible components
meet.  (Such a morphism exists by Proposition~6.6 of \cite{HH:FP}.)
We will call $(\wh X,S)$ a \textit{regular $T$-model} of $F$. 

Following Section~6 of~\cite{HH:FP}, for each point $Q \in S$ as above 
we let $R_Q$ be the local ring of $\wh X$ at $Q$, and we let
$\wh R_Q$ be its completion at the maximal ideal corresponding to the point $Q$.  Also, for each connected component $U$ of $X \smallsetminus S$
we let $R_U$ be the subring of $F$
consisting of the rational functions that are regular at the points of $U$, and we
let $\wh R_U$ denote its $t$-adic completion.  If
$Q \in S$ lies in the closure $\bar U$ of a component $U$, then there is a
unique branch $\wp$ of $X$ at $Q$ lying on $\bar U$ (since $\bar U$ is regular).  
Here $\wp$ is a height
one prime of $\wh R_Q$ that contains~$t$, and we may identify it with the pair
$(U,Q)$.  We write $\wh R_\wp$ for the completion of the discrete valuation ring obtained by localizing $\wh R_Q$ at its prime ideal $\wp$.  Thus $\wh R_Q$ is naturally contained in $\wh R_\wp$.

In the above situation, with $\wp = (U,Q)$, there is also a natural inclusion $\wh R_U \hookrightarrow \wh R_\wp$.  To see this, first observe that the localizations of $R_U$ and of $R_Q$ at the generic point of $\bar U$ are the same; and this localization is naturally contained in the $t$-adically complete ring $\wh R_\wp$.  Thus so is $R_U$ and hence its $t$-adic completion $\wh R_U$.  

The inclusions of $\wh R_U$ and of $\wh R_Q$ into $\wh R_\wp$, for $\wp = (U,Q)$, induce inclusions of the corresponding
fraction fields $F_U$ and $F_Q$ into the fraction field $F_\wp$ of
$\wh R_\wp$.  Let $I$ be the index set consisting of all $U,Q,\wp$ described above.  Via the above inclusions, the
collection of all $F_\xi$, for $\xi \in I$, then forms an inverse system with respect to the ordering given by setting $U \succ \wp$ and $Q \succ \wp$ if $\wp = (U,Q)$.

Under the above hypotheses, suppose that for every field extension $L$ of $F$, we are given a category $\mc A(L)$ of algebraic structures over $L$ (i.e.\ finite dimensional $L$-vector spaces with additional structure, e.g.\ associative $L$-algebras), along with base-change functors $\mc A(L) \to \mc A(L')$ when $L \subseteq L'$.  An $\mc A$-\textit{patching problem} for $(\wh X,S)$ consists of an object $V_\xi$ in $\mc A(F_\xi)$ for each $\xi \in I$, together with isomorphisms $\phi_{U,\wp}:V_U  \otimes_{F_U} F_\wp \to V_\wp$ and $\phi_{Q,\wp}:V_Q \otimes_{F_Q} F_\wp \to V_\wp$ in $\mc A(F_\wp)$.
These patching problems form a category, denoted by $\PP_{\mc A}(\wh X,S)$, and there is a base change functor $\mc A(F) \to \PP_{\mc A}(\wh X,S)$.  (Note that the above definition of patching problem is equivalent to that given in Section~2 of \cite{HH:FP} for vector spaces, since the restriction of $\phi_{U,\wp}$ to $ V_U \subset V_U  \otimes_{F_U} F_\wp$ is an $F_U$-linear map that induces $\phi_{U,\wp}$ upon tensoring, and similarly for $\phi_{Q,\wp}$.)  

If an object $V \in \mc A(F)$ induces a given patching problem up to
isomorphism, 
we will say that $V$ is a \textit{solution} 
to that patching problem, or that it is \textit{obtained by patching} the objects $V_\xi$.  
We similarly speak of obtaining a morphism over $F$ by patching morphisms in $\PP_{\mc A}(\wh X,S)$.

The next result is given by~\cite{HH:FP}, Theorem~7.1(i,v,vi), in the context
of Theorem~6.4 of that paper. 

\begin{thm}
\label{HH}
Let $K$ be a complete discretely valued field with valuation ring $T$, and let $F/K$ be a finitely generated field extension of transcendence degree one.  Let $(\wh X,S)$ be a regular $T$-model of $F$.
For a field extension $L$ of $F$, let $\mc A(L)$ denote any of the following categories:

\begin{enumerate}
\item \label{assoc_patch}
the category of finite dimensional associative $L$-algebras,
\item \label{Galois_patch} the category of $G$-Galois $L$-algebras for some
fixed finite group $G$, with $G$-equivariant morphisms, or 
\item \label{csa_patch} the category of central simple $L$-algebras with algebra
homomorphisms.
\end{enumerate}
Then the base change functor $\mc A(F) \to \PP_{\mc A}(\wh X,S)$ is an
equivalence of categories. In particular, every $\mc A$-patching problem has a unique
solution. 
\end{thm}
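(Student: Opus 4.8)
This statement is a restatement of \cite{HH:FP}, Theorem~7.1(i,v,vi) (carried out in the setting of \loccit, Theorem~6.4), so the plan is to reconcile the two formulations and recall why the argument goes through. The parenthetical remark following the definition of $\PP_{\mc A}(\wh X,S)$ already identifies our notion of patching problem with the one used in \loccit\ for vector spaces, so it suffices to prove the theorem in the vector space case and then bootstrap to the three listed categories of algebraic structures.

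For the vector space case, full faithfulness of the base change functor is equivalent to the assertion that the $F_\xi$, $\xi\in I$, form an inverse system with inverse limit $\varprojlim_{\xi} F_\xi = F$; concretely, that the sequence
\[
0 \to F \to \Bigl(\prod_{U} F_U\Bigr) \times \Bigl(\prod_{Q \in S} F_Q\Bigr) \lra \prod_{\wp} F_\wp
\]
is exact, where the last map sends $((a_U),(a_Q))$ to the family of differences $a_U - a_Q \in F_\wp$ (via the inclusions $F_U, F_Q \hra F_\wp$), indexed by the branches $\wp=(U,Q)$. Indeed, $\Hom_F(V,W)\otimes_F F_\xi = \Hom_{F_\xi}(V\otimes_F F_\xi, W\otimes_F F_\xi)$, so the same exact sequence with $F$ replaced by the finite dimensional vector space $\Hom_F(V,W)$ says exactly that $F$-linear maps can be detected and patched from the $F_\xi$; this is the intersection property established in \cite{HH:FP}. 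For essential surjectivity, a vector space patching problem, after choosing bases of the $V_U$ and $V_Q$, is recorded by transition matrices $g_\wp \in \GL_n(F_\wp)$, and constructing a solution over $F$ amounts to writing each $g_\wp$ as a product of an invertible matrix over $F_U$ and one over $F_Q$, simultaneously and compatibly around each point $Q\in S$. This \emph{simultaneous factorization} property for $\GL_n$ attached to $(\wh X,S)$ is the technical core: it is where the completeness of the rings $\wh R_Q$ and $\wh R_U$ and the choice of $S$ (containing all crossing points of $X$) are used, and it is proved in \cite{HH:FP} by a $t$-adic successive-approximation argument reducing to the factorization of matrices congruent to the identity. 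The abstract patching mechanism of \cite{HH:FP}, Theorem~6.4, packages these two inputs --- that the inverse limit is $F$, and the factorization --- into the equivalence for vector spaces, and I would invoke it directly.

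It remains to bootstrap from vector spaces to the categories in (\ref{assoc_patch}), (\ref{Galois_patch}), (\ref{csa_patch}). An object of any of these is a finite dimensional vector space equipped with extra $F$-linear structure data --- a multiplication $V\otimes_F V \to V$ and a unit in cases (\ref{assoc_patch}) and (\ref{csa_patch}), together with a homomorphism $G \to \GL(V)$ in case (\ref{Galois_patch}) --- and morphisms are linear maps respecting that data. Given a patching problem in $\mc A$, I would patch the underlying vector spaces, then patch the structure maps using full faithfulness for vector spaces (applied to $V\otimes_F V$, etc.); the defining identities (associativity, unitality, the relations making $G \to \GL(V)$ an action) are equalities of $F$-linear maps, hence hold over $F$ because they hold over each $F_\xi$ and base change for vector spaces is faithful. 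Morphisms are patched the same way, and full faithfulness is inherited from the vector space case. Finally one checks that the patched object lies in $\mc A(F)$: the properties ``central simple of degree $n$'' and ``$G$-Galois'' each descend along an arbitrary field extension of $F$ (both can be tested over a separable closure, where they become $\cong \Mat_n$ resp.\ $\cong \prod_{g\in G}\overline F$ with the regular $G$-action), so it suffices that they hold after base change to a single $F_\xi$, which is part of the given data. I expect the only genuine obstacle to be the $\GL_n$-factorization underlying the vector space case; the reduction from algebras to vector spaces, and the descent of ``central simple'' and ``$G$-Galois'', are formal.
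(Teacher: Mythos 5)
Your proposal is correct and takes essentially the same route as the paper, whose entire ``proof'' is the citation to \cite{HH:FP}, Theorem~7.1(i,v,vi) in the context of Theorem~6.4 there; you invoke the same results and additionally give an accurate sketch of their internal structure (the intersection/inverse-limit property plus simultaneous $\GL_n$-factorization for the vector space case, followed by the formal bootstrap to algebras and the observation that ``central simple'' and ``$G$-Galois'' can be checked after a field extension). Nothing in your outline is at odds with how \cite{HH:FP} actually proceeds.
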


Given a finite group $G$, a subgroup $H \subseteq G$, and an 
$H$-Galois field extension $L/F$, there is an induced $G$-Galois $F$-algebra
$E = \Ind_H^G L$ given by a direct sum of copies 
of $L$ indexed by the left cosets of $H$ in $G$; e.g.\ see~\cite{HH:FP}, Section~7.2.  In particular, if $H=1$, then $E$ is 
a split extension of $F$; i.e., $E\cong F^{\oplus |G|}$.

Following Schacher \cite{Sch:subfields}, we say that a finite field extension $L/F$
is {\em adequate} 
if there exists an $F$-central division algebra $D$ such that 
$L$ is isomorphic to a maximal subfield of $D$ as an $F$-algebra. In
particular, if $L/F$ is an adequate Galois extension with group $G$, then $G$
is admissible over $F$. Note that $F$ is adequate over itself.

The following lemma shows that the objects in
Theorem~\ref{HH}(\ref{Galois_patch}) 
and~(\ref{csa_patch}) may in a sense be patched in
pairs. Note that the condition on the indices $(G:H_Q)$ implies that the
subgroups $H_Q$ generate $G$, which would suffice to obtain a $G$-Galois
field extension by patching (cf.~\cite{Ha:GC}, Proposition~2.2, or the proof of~\cite{HH:FP}, Theorem 7.3). In our set-up, however, a stronger
condition than generation is needed, due to the restriction on admissible groups given in Proposition~\ref{main_forward}. 

\begin{lem} \label{main_lem}
Let $G$ be a finite group, and let $F$ and $(\wh X,S)$ be as in Theorem~\ref{HH}.  Suppose that for each $Q \in S$ we are given a subgroup $H_Q \subseteq G$ and an $H_Q$-Galois adequate field extension $L_Q/F_Q$  such that $L_Q \otimes_{F_Q} F_\wp$ is a split extension $F_\wp^{\oplus|H_Q|}$ of $F_\wp$ for each branch $\wp$ at $Q$.  Assume that the 
greatest common divisor of the indices $(G:H_Q)$ is equal to $1$.  Then  there exists an adequate $G$-Galois field extension $E/F$ such that
$E \otimes_F F_Q \cong E_Q := \Ind_{H_Q}^G L_Q$ for all $Q \in S$.
\end{lem}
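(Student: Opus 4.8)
The plan is to build the desired $G$-Galois field extension $E/F$ by solving a patching problem, and then to exhibit a division algebra $D/F$ having $E$ as a maximal subfield by solving a second (compatible) patching problem for central simple algebras. First I would assemble the Galois data: for each $Q \in S$ we are handed an $H_Q$-Galois extension $L_Q/F_Q$, and we set $E_Q := \Ind_{H_Q}^G L_Q$, a $G$-Galois $F_Q$-algebra. For each connected component $U$ of $X \smallsetminus S$ I would take $E_U := F_U^{\oplus |G|}$, the split $G$-Galois $F_U$-algebra, and for each branch $\wp = (U,Q)$ I would take $E_\wp := F_\wp^{\oplus |G|}$. The hypothesis that $L_Q \otimes_{F_Q} F_\wp$ is split over $F_\wp$ shows that $E_Q \otimes_{F_Q} F_\wp \cong F_\wp^{\oplus|G|} \cong E_\wp$ as $G$-Galois $F_\wp$-algebras, and similarly $E_U \otimes_{F_U} F_\wp \cong E_\wp$; choosing such isomorphisms $\phi_{Q,\wp}$, $\phi_{U,\wp}$ gives a patching problem in the category of $G$-Galois algebras. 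By Theorem~\ref{HH}(\ref{Galois_patch}) this has a unique solution: a $G$-Galois $F$-algebra $E$ with $E \otimes_F F_Q \cong E_Q$ and $E \otimes_F F_U$ split.

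Next I would argue that $E$ is actually a \emph{field}. This is where the gcd hypothesis enters. If $E$ were not a field, it would be induced from a proper subgroup, i.e.\ $E \cong \Ind_H^G L$ for some $H \subsetneq G$ and $H$-Galois field extension $L/F$ (equivalently, the stabilizer of a factor is a proper subgroup $H$). Base-changing to $F_Q$, the algebra $E_Q = \Ind_{H_Q}^G L_Q$ would then be induced from $H \cap (\text{conjugates})$, which forces $H_Q$ to be contained in a conjugate of $H$ for each $Q$; hence $(G:H)$ divides each index $(G:H_Q)$, so $(G:H) \mid \gcd_Q (G:H_Q) = 1$, giving $H = G$, a contradiction. (I would phrase this in terms of the decomposition groups of the primes of $E$ lying over $F_Q$, as in the cited arguments of \cite{Ha:GC} and \cite{HH:FP}; the point is that generation of $G$ by the $H_Q$ would only give that $E$ is a field after a suitable specialization, whereas the gcd condition gives it outright here.)

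Then I would produce the division algebra. For each $Q$, since $L_Q/F_Q$ is adequate there is an $F_Q$-central division algebra $D_Q$ containing $L_Q$ as a maximal subfield; because $E_Q = \Ind_{H_Q}^G L_Q$, the algebra $A_Q := M_{(G:H_Q)}(D_Q)$ is central simple over $F_Q$ of degree $|G|$ and is split by $E_Q$, hence a crossed product $\cross{E_Q}{G}{c_Q}$ for a suitable cocycle. For each component $U$ I take $A_U := M_{|G|}(F_U)$, the split algebra, which is trivially the crossed product $\cross{E_U}{G}{1}$; and $A_\wp := M_{|G|}(F_\wp)$. Over each branch $\wp$, the algebra $A_Q \otimes_{F_Q} F_\wp$ is split (since $L_Q$, hence $D_Q$, splits over $F_\wp$ — this is exactly the hypothesis $L_Q \otimes_{F_Q} F_\wp \cong F_\wp^{\oplus|H_Q|}$), so it is isomorphic to $A_\wp$, and likewise $A_U \otimes_{F_U} F_\wp \cong A_\wp$; I would choose these isomorphisms compatibly with the chosen $G$-actions (i.e.\ as isomorphisms of crossed products over the already-patched $E_\wp$), getting a patching problem in the category of central simple algebras. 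By Theorem~\ref{HH}(\ref{csa_patch}) (run in tandem with, or as a refinement of, the Galois patching) there is a central simple $F$-algebra $A$ of degree $|G|$ with $A \otimes_F F_Q \cong A_Q$, containing $E$ as a maximal subfield so that $A = \cross{E}{G}{c}$.

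Finally I would check that $A$ is a division algebra, equivalently that $\ind A = |G| = [E:F]$. This is the main obstacle, and it is handled locally: the index of a central simple algebra over $F$ is the least common multiple of its indices over the local fields $F_Q$ and $F_U$ (a standard consequence of the patching setup — the degree of $A$ equals the degree of each $A_\xi$, and an idempotent splitting $A$ would patch from local ones, cf.\ the index arguments in \cite{HH:FP}). Over each $F_U$ the algebra is split, contributing nothing; over $F_Q$ we have $\ind(A_Q) = \ind(D_Q) = (G : H_Q) \cdot (\text{something})$... more precisely $\ind(A_Q) = \ind(D_Q)$, and since $L_Q$ is a maximal subfield of the division algebra $D_Q$ we get $\ind(D_Q) = [L_Q : F_Q] = |H_Q|$. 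Hence $\ind A = \lcm_Q |H_Q|$. To conclude $\ind A = |G|$ I would, for each prime $p \mid |G|$, note that the $p$-part of some index $|H_Q|$ must equal $|G|_p$: indeed $p \nmid (G:H_Q)$ for at least one $Q$ (since $\gcd_Q(G:H_Q) = 1$ means the indices have no common prime factor, so for each $p$ there is a $Q$ with $p \nmid (G:H_Q)$, i.e.\ $|H_Q|_p = |G|_p$), whence $|H_Q|$ contributes the full $p$-part. Taking the lcm over all $Q$ recovers every prime power dividing $|G|$, so $\ind A = |G|$, $A$ is a division algebra, $E$ is a maximal subfield of it, $E/F$ is adequate, and $E \otimes_F F_Q \cong E_Q$ as required. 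The delicate points requiring care are the compatibility of the two patching problems (so that the patched $E$ really sits inside the patched $A$) and the local--global principle for the index; both are available from the machinery of \cite{HH:FP} invoked in Theorem~\ref{HH}.
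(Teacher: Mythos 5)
Your proposal follows the paper's proof in all essentials: patch the induced $G$-Galois algebras $E_\xi = \Ind_{H_\xi}^G L_\xi$ and the matrix algebras $A_\xi = \Mat_{(G:H_\xi)}(D_\xi)$ in tandem, arrange the compatibility of the two patching problems by adjusting the local isomorphisms (the paper does this with the Skolem--Noether theorem; this is the one delicate step you flag but do not actually carry out), and conclude with an index count driven by the hypothesis that the indices $(G:H_Q)$ have greatest common divisor $1$. Two of your sub-arguments differ from the paper's, in opposite directions. First, your separate group-theoretic proof that $E$ is a field (via stabilizers of the factors of $E\otimes_F F_Q$ forcing $(G:H)\mid (G:H_Q)$ for all $Q$) is correct but redundant: the paper obtains fieldness for free at the very end, since $E$ is a commutative separable subalgebra of the division algebra $A$. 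Second, and more substantively, you invoke the equality $\ind A = \lcm_\xi \ind(A_\xi)$, which in this setting is a genuine local--global theorem; the justification you sketch (patching an idempotent) would require real work and is the weakest link in your write-up. Fortunately you only ever use the divisibility $\lcm_\xi \ind(A_\xi) \mid \ind(A)$, and that direction is elementary: $\ind(A\otimes_F F_\xi)$ divides $\ind(A)$ for any field extension (\cite{Pie}, Proposition~13.4(v)), which together with $\ind(A)\mid\deg(A)=|G|=\lcm_\xi |H_\xi|$ already forces $\ind(A)=|G|$. Replacing your appeal to the full local--global principle by this trivial direction, your argument is exactly the paper's.
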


\begin{proof}
We first describe our setup for patching.
Let $I$ be the set of all indices $U, Q, \wp$, as in the introduction to this section.
If $\xi$ is either a branch $\wp$ of the closed fiber $X$ at a point $Q \in S$
or else a component $U$ of $X \smallsetminus S$, 
let $L_\xi = F_\xi$ and $H_\xi = 1$ and let $E_\xi = \Ind_{H_\xi}^G L_\xi \cong F_\xi^{\oplus |G|}$, a split extension of $F_\xi$.  
Thus for every $\xi \in I$, the $F_\xi$-algebra
$E_\xi = \Ind_{H_\xi}^G L_\xi$ is a 
$G$-Galois $F_\xi$-algebra, in the sense of \cite{DeIn}, Section~III.1.  
Let $n=|G|$ and let $n_\xi=(G:H_\xi)$ for $\xi \in I$.  Since each field
extension $L_\xi/F_\xi$ is adequate, 
we may choose division
algebras $D_\xi$ that contain the fields $L_\xi$ as maximal subfields. 
Thus $D_\xi$ is a crossed product $F_\xi$-algebra with respect to $H_\xi$ and $L_\xi$; and $D_\xi$ is split over $L_\xi$ (\cite{Sa:LN}, Corollary~7.3).  Let
$A_\xi = \Mat_{n_\xi}(D_\xi)$.  In particular, if $\xi$ is a branch $\wp$ or a component $U$ then
$n_\xi=n$,  $D_\xi=F_\xi$, and $A_\xi = \Mat_n(F_\xi)$.  

\smallskip

For every $\xi \in I$, we claim that
$E_\xi$ embeds as a maximal commutative separable $F_\xi$-subalgebra of $A_\xi$. 
To see this, note that since the central simple $L_\xi$-algebra $D_\xi \otimes_{F_\xi} L_\xi$ is split, 
so is the $L_\xi$-algebra $D_\xi \otimes_{F_\xi} E_\xi$, in the sense of being a direct sum of matrix algebras over $L_\xi$.  It then follows from \cite{DeIn} (Theorem II.5.5 together with Proposition~V.1.2 and Corollary~I.1.11) 
that $E_\xi$ is a maximal
commutative separable $F_\xi$-subalgebra of some central simple $F_\xi$-algebra $B_\xi$ that is Brauer
equivalent to $D_\xi$.  Such an algebra is a matrix algebra over $D_\xi$,
necessarily of degree $n$ over $F_\xi$; 
so $A_\xi$ and $B_\xi$ are each matrix algebras of the same degree, and hence are isomorphic.
This shows the existence of an $F_\xi$-algebra embedding
$\iota_\xi:E_\xi\rightarrow A_\xi$, proving the claim. 

\smallskip

The proof now proceeds in three steps.
First, we want to use patching to obtain a $G$-Galois (commutative
separable) $F$-algebra $E$.  
Observe that for a branch $\wp = (U,Q)$, 
we have isomorphisms $L_U \otimes_{F_U} F_\wp \to \Ind_1^{H_U} F_\wp$ and 
$L_Q \otimes_{F_Q} F_\wp \to \Ind_1^{H_Q} F_\wp$; these yield isomorphisms $\phi_{U,\wp} : E_U \otimes_{F_U} F_\wp
\to E_\wp$ and $\phi_{Q,\wp} : E_Q \otimes_{F_Q} F_\wp
\to E_\wp$.
With respect to the various maps $\phi_{U,\wp},\phi_{Q,\wp}$, the $G$-Galois $F_\xi$-algebras $E_\xi$ 
(for $\xi\in I$) may be patched, by
Theorem~\ref{HH}(\ref{Galois_patch}), to obtain a $G$-Galois $F$-algebra $E$.  That is, $E \otimes_F F_\xi \cong E_\xi$ for all $\xi \in I$, 
compatibly with the above isomorphisms.

\smallskip 

Next, we will also patch the algebras $A_\xi$, compatibly with the inclusions
$\iota_\xi$.
Let $\wp = (U,Q)$ be a branch at a point $Q \in S$, and let $\xi=U$ or $Q$.  
Since $L_\xi \otimes_{F_\xi} F_\wp \cong F_\wp^{\oplus|H_\xi|}$, there is an
embedding 
$L_\xi \hra F_\wp$ of $L_\xi$-algebras by projecting onto the direct summand
corresponding to the identity element in $H_\xi$.  With respect to this embedding, we have 
\[D_\xi \otimes_{F_\xi} F_\wp \cong (D_\xi \otimes_{F_\xi} L_\xi) \otimes_{L_\xi} F_\wp 
\cong \Mat_{|H_\xi|}(L_\xi) \otimes_{L_\xi} F_\wp
\cong \Mat_{|H_\xi|}(F_\wp),\] 
since $D_\xi \otimes_{F_\xi} L_\xi$ is a split central simple $L_\xi$-algebra.  
So there is an isomorphism $\til\psi_{\xi,\wp}:A_\xi \otimes_{F_\xi} F_\wp \to
\Mat_n(F_\wp) = A_\wp$ 
of $F_\wp$-algebras.
By \cite{Jac:DA}, Theorem
 2.2.3(2), 
the $F_\wp$-algebra embedding $\tilde \psi_{\xi,\wp} \circ (\iota_\xi \otimes_{F_\xi} F_\wp) \circ 
\phi_{\xi,\wp}^{-1}:E_\wp \to A_\wp$
extends to an inner automorphism $\alpha_{\xi,\wp}$ of $A_\wp$.  
Then $\alpha^{-1}_{\xi,\wp} \tilde \psi_{\xi,\wp} \circ (\iota_\xi  \otimes_{F_\xi} F_\wp) = \phi_{\xi,\wp}:E_\xi \otimes_{F_\xi} F_\wp \to E_\wp \subseteq A_\wp$.  Let $\psi_{\xi,\wp} = \alpha^{-1}_{\xi,\wp} \til\psi_{\xi,\wp}: A_\xi\otimes_{F_\xi}F_\wp\rightarrow  A_\wp$. 
Hence if $\xi$ is a point of $X$ or
a component of $X\smallsetminus S$, there is the following
commutative diagram: 
\[\xymatrix{
A_\xi\otimes_{F_\xi}F_\wp \ar[rr]^-{\psi_{\xi,\wp}}  & & A_\wp
 \\
E_\xi\otimes_{F_\xi}F_\wp \ar[u]^-{\iota_\xi  \otimes_{F_\xi} F_\wp} \ar[rr]_-{\phi_{\xi,\wp}} & &
E_\wp \ar[u]^-{\iota_\wp}
}\]
By Theorem \ref{HH}(\ref{csa_patch}), we may patch the algebras $A_\xi$ (for all $\xi \in I$) to obtain a
central simple $F$-algebra $A$.  Furthermore, because of the compatibility expressed in the above diagram,
the morphisms $\iota_{\xi}$ patch to give a morphism $\iota : E \to A$.
Therefore $E$ is a commutative separable subalgebra of $A$, and its
dimension over $F$ equals the dimension $n$ of each $E_\xi$ over $F_\xi$. Thus $E$
is maximal in $A$.

\smallskip

Finally, we will show that $A$ is a division algebra, which implies that $E$
is a field and hence is an adequate $G$-Galois field extension of~$F$.  For this,
note that 
\[n/n_\xi = |H_\xi| = [L_\xi : F_\xi] = \deg(D_\xi) = \ind(A_\xi) \mid \ind(A),\]
where the third equality holds because $L_\xi$ is a maximal subfield of the crossed product algebra $D_\xi$ (\cite{Sa:LN}, Corollary~7.3),
and where the divisibility follows from
\cite{Pie}, Proposition~13.4(v). 
Hence 
\[\deg(A) = n = \lcm(n/n_\xi,\ \xi \in I) \mid \ind(A) \mid \deg(A),\] 
where the second equality follows from the hypothesis on the indices $n_\xi=(G:H_\xi)$ being relatively prime (even just for $\xi = Q \in S$), using that $n = |G|$.  So $\deg(A) = n = \ind(A)$, and hence 
$A$ is a division algebra. This finishes the proof.
\end{proof}

Consider a two-dimensional regular local ring $R$, say with fraction field $F$, and whose maximal ideal has generators $f,t$.  Let $w:F^\times \to \Z$ be the $f$-adic discrete valuation on $F$, with valuation ring $W = R_{(f)}$.   The ring $R/f R$ is also a discrete valuation ring, with fraction field $\bar W = W/f W$.  
The corresponding discrete valuation $u:\bar W^\times \to \Z$ is equal to the $\bar t$-adic valuation on $\bar W$, where $\bar t \in \bar W$ is the image of $t \in W$ under the canonical map $\eta:W \to \bar W$.
With  $\Z \times \Z$ ordered lexicographically, there is a rank two valuation $u \circ w:F^\times \to \Z \times \Z$ defined by 
\[(u \circ w)(a) = \bigl(w(a),u(\eta(af^{-w(a)}))\bigr).\eqno{(*)}\]
Note that $u \circ w$ is the valuation whose associated place is the composition of the places associated to the valuations $u$ and $w$; this follows from the fact that both have valuation ring $\eta^{-1}(R/f R) = R + f W$.  (See \cite{Bo:CA}, Chapter~VI, Sections~3-4, for a general discussion.) 

\begin{lem}\label{divalg_buildblocks}
Let $R$ be a complete regular local ring whose maximal ideal has generators $f,t$; let $n$ be a positive integer; and assume that the fraction field $F$ of $R$ contains a primitive $n$-th root of unity $\zeta$.  Let $D$ be the associative $F$-algebra generated by elements $Y,Z$ satisfying the relations 
\[Y^n = \frac{f}{f - t}, \ \ \ \ Z^n = \frac{f-t^2}{f-t-t^2}, \ \ \ \  YZ=\zeta ZY.\]
Then $D$ is a division algebra over $F$.
\end{lem}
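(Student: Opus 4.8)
The strategy is to recognize $D$ as a symbol algebra and to control its index by passing to a suitable valuation. First I would note that $D$ is the symbol algebra $\bigl(\tfrac{f}{f-t},\ \tfrac{f-t^2}{f-t-t^2}\bigr)_{\zeta,n}$ over $F$, so it is a central simple $F$-algebra of degree $n$, and it suffices to show $\ind D = n$, equivalently that $D\otimes_F L$ is nonsplit for every field extension $L/F$ of degree strictly less than $n$; in fact it is enough to show $\ind(D\otimes_F F') = n$ for a single well-chosen overfield $F'$. The natural choice is to complete along the divisor $f=0$: let $W = R_{(f)}$ be the $f$-adic valuation ring with valuation $w$, and let $F_w$ be the completion. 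Then the residue field is $\bar W = W/fW = \Quo(R/fR)$, a complete (after passing to the completion of $W$) discrete valuation ring with uniformizer $\bar t$ and corresponding valuation $u$, exactly as set up in the paragraph preceding the lemma.

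The key computation is the ramification of $[D]$ at $w$. Since $D = \bigl(g_1, g_2\bigr)_{\zeta,n}$ with $g_1 = \tfrac{f}{f-t}$ and $g_2 = \tfrac{f-t^2}{f-t-t^2}$, one has $w(g_1) = 1$ (as $f-t$ is a unit in $W$) and $w(g_2) = 0$. The tame symbol formula for the ramification map then gives
\[
\ram_w[D] = \text{class in } H^1(\bar W, \Z/n) \text{ of the Kummer element } \Bigl(\tfrac{f-t^2}{f-t-t^2}\Bigr)^{-w(g_1)}\bmod (\bar W^\times)^n,
\]
i.e.\ $\ram_w[D]$ is (up to sign and units) the character cutting out the extension $\bar W(\sqrt[n]{\overline{g_2}})$. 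Now I reduce mod $f$: in $\bar W$ the image of $g_2$ is $\tfrac{-\bar t^2}{-\bar t - \bar t^2} = \tfrac{\bar t}{1+\bar t}$, which has $u$-valuation $1$. Hence $\overline{g_2}$ is a uniformizer times a unit in $\bar W$, so $\bar W(\sqrt[n]{\overline{g_2}})/\bar W$ is a totally (tamely) ramified extension of degree exactly $n$; therefore $\per(\ram_w[D]) = n$. Since $\per \mid \ind \mid n$ always holds and $\ind(D\otimes_F F_w) \ge \ind$ of its residue data forces $n \mid \ind(D\otimes_F F_w) \mid \ind D \le n$, we get $\ind D = n = \deg D$, so $D$ is a division algebra.

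The main obstacle — and the step that needs the most care — is the residue-field valuation computation: one must check that after reducing mod $f$ the element $g_2$ genuinely has valuation prime to $n$ at $u$ (here valuation exactly $1$), since that is precisely what makes the residual Kummer extension totally ramified of full degree $n$ and hence $\per(\ram_w[D]) = n$. The choice of the specific rational functions $\tfrac{f}{f-t}$ and $\tfrac{f-t^2}{f-t-t^2}$ is engineered exactly so that $w(g_1) = 1$ while $u(\overline{g_2}) = 1$; verifying these two facts (both elementary, using that $f-t$, $f-t-t^2$ are units in the appropriate local rings and that $f-t^2 \equiv -t^2$, $f-t-t^2\equiv -t-t^2 \pmod f$) is the crux, and everything else is the formal machinery of $\per \mid \ind$ together with the standard formula for the tame ramification of a symbol algebra. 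I would also remark that completeness of $R$ is used only to ensure the relevant roots of unity and Hensel-type lifts behave well; the argument via $\ram_w$ would work over the henselization as well.
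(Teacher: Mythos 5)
Your identification of $D$ as the symbol algebra $(a,b)_{\zeta,n}$ with $a = f/(f-t)$ and $b = (f-t^2)/(f-t-t^2)$, and your two key computations $w(a)=1$ and $u(\bar b)=1$, are exactly the computations in the paper's proof; what differs is how they are assembled. The paper combines $w$ and $u$ into the rank-two valuation $v = u\circ w$ on $F$ (set up in the paragraph preceding the lemma), notes that $v(a)=(1,-1)$ and $v(b)=(0,1)$ generate $(\Z\times\Z)/n(\Z\times\Z)$, a group of order $n^2$, and then invokes Wadsworth's criterion for a symbol algebra to be a (totally ramified) division algebra with respect to a valuation. You instead stay with the rank-one $f$-adic valuation $w$, compute the tame residue $\ram_w[D]$ as the Kummer class of $\bar b^{\pm1}$, and use $u(\bar b)=1$ to see that this class has order exactly $n$ in $\bar W^\times/(\bar W^\times)^n$, whence $n = \per(\ram_w[D]) \mid \per[D] \mid \ind D \mid \deg D = n$. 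Both routes work in the tame case and are comparable in length; yours has the advantage of using only the ramification map on the Brauer group, the same tool used throughout Sections 2--3 of the paper, rather than valuation theory of division algebras.

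There is, however, one genuine gap. The ramification map and the tame-symbol formula you use are only defined on the part of $\Br(F)$ of period prime to the residue characteristic of $w$, i.e.\ to $\cha(\Quo(R/fR))$, and the hypotheses of the lemma do not guarantee this: they only require that $F$ contain a primitive $n$-th root of unity, which in mixed characteristic is compatible with $p \mid n$ and $p \in fR$. For instance, take $R = \Z_2[[t]]$, $f=2$, $n=2$, $\zeta=-1$; then $\Quo(R/fR)=\F_2((t))$ has characteristic $2$ and $\ram_w$ is not available for the class of $D$. The paper's rank-two valuation argument sidesteps this by constructing a valuation on $D$ itself rather than taking residues of Brauer classes. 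Your argument does cover every instance in which the lemma is applied later in the paper (there $\Quo(R/fR)$ contains $K$, so its characteristic equals $\cha(F)$, which is prime to $n$ whenever $F$ contains a primitive $n$-th root of unity), so the gap is harmless for the application; but to prove the lemma as stated you would need either to add the hypothesis that $n$ is prime to $\cha(\Quo(R/fR))$ or to pass to the rank-two valuation. A minor further point: your closing inequality about ``$\ind$ of the residue data'' is garbled as written; the clean chain is simply $\per(\ram_w[D]) \mid \per[D] \mid \ind D \mid \deg D$.
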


\begin{proof}
The $F$-algebra $D$ is the symbol algebra over $F$ with respect to the $n$-th root of unity $\zeta$ and the elements $a=f/(f - t)$ and $b=(f-t^2)/(f-t-t^2)$ of $F^\times$; see \cite{Wad:VT}, (0.3).  This is a central simple $F$-algebra of degree $n$. 

Consider the rank two valuation $v := u \circ w:F^\times \to \Gamma := \Z \times \Z$ as above, with respect to the generators $f,t$ of the maximal ideal of~$R$.  Thus $v$ is given by the expression $(*)$ above.  Observe that $v(a) = (1,-1)$ and $v(b) = (0,1)$.  Thus the images of $v(a)$ and $v(b)$ in $\Gamma/n\Gamma$ generate that group, which has order $n^2$.  Since $F$ contains a primitive $n$-th root of unity, it then follows from \cite{Wad:VT}, Example~4.4, that $D$ is a division algebra over $F$. 
\end{proof}

The following result now provides the converse direction of our main theorem (in fact in a more general situation):

\begin{prop}\label{main_converse}
Let $G$ be a finite group whose Sylow subgroups are abelian
of rank at most $2$. 
Let $F$ be a finitely generated field extension of transcendence degree one over a complete discretely valued field $K$, and 
assume that $F$ contains a primitive $|G|$-th root of unity. 
Then $G$ is admissible over~$F$. 
\end{prop}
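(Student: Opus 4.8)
The plan is to realize $G$ as a Galois group of an adequate field extension by applying the patching machinery, specifically Lemma~\ref{main_lem}. The first step is to reduce to the Sylow subgroups: since the Sylow subgroups of $G$ are abelian of rank at most $2$, and since the greatest common divisor of the indices $(G:P)$ over Sylow subgroups $P$ (one for each prime dividing $|G|$) is $1$ (indeed each $(G:P)$ is the prime-to-$p$ part of $|G|$, and these are coprime), the hypothesis of Lemma~\ref{main_lem} will be satisfied as soon as we can, for each such $P$, choose a point $Q \in S$ and an adequate $P$-Galois field extension $L_Q/F_Q$ that splits completely on every branch $\wp$ at $Q$. So I would fix a regular $T$-model $(\wh X, S)$ of $F$ (which exists, by the discussion preceding Theorem~\ref{HH}), enlarging $S$ if necessary so that it contains enough distinct closed points — one for each prime $p \mid |G|$, plus we may assign the trivial subgroup $H_Q = 1$ to all remaining points of $S$, which contributes index $|G|$ and hence does not affect the gcd.

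The heart of the matter is thus the local construction: given a complete discretely valued field $F_Q$ (the fraction field of the complete local ring $\wh R_Q$ at a closed point $Q$, a two-dimensional complete regular local ring), and given an abelian group $P$ of rank at most $2$ whose order is prime to $\cha(k)$ and such that $F_Q$ contains a primitive $|P|$-th root of unity, I must produce an adequate $P$-Galois field extension $L_Q/F_Q$ that becomes split after tensoring with each $F_\wp$. Write $P \cong \Z/m \times \Z/n$ with $n \mid m$ (rank at most $2$), and let $\wh R_Q$ have regular system of parameters $f, t$ where $t$ is the uniformizer of $T$. The natural candidate for $L_Q$ is a compositum of two radical extensions, $F_Q(f^{1/m}, g^{1/n})$ or similar, arranged so that the associated division algebra is the tensor product of two symbol algebras. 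Here is where Lemma~\ref{divalg_buildblocks} enters: it exhibits, via a carefully chosen pair of units $a = f/(f-t)$ and $b = (f-t^2)/(f-t-t^2)$ whose values under the rank-two valuation $u \circ w$ generate $\Gamma/n\Gamma$, a degree-$n$ division algebra $D$ over $F$ that is a symbol algebra; the extension $F_Q(Y) = F_Q(a^{1/n})$ sitting inside $D$ (together with $F_Q(b^{1/n})$) gives a $\Z/n \times \Z/n$-Galois maximal subfield. For the general abelian group of rank $\le 2$ one adapts this with exponents $m$ and $n$; the key points to verify are that $L_Q$ is genuinely a field (equivalently $D$ is a division algebra, which is exactly Lemma~\ref{divalg_buildblocks} or its analogue), that $L_Q/F_Q$ is adequate (immediate once $D$ is a division algebra containing $L_Q$ as a maximal subfield), and that $L_Q \otimes_{F_Q} F_\wp$ is split for each branch $\wp$ at $Q$.

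The splitting condition on branches is the step I expect to be the main obstacle, and it is why the units in Lemma~\ref{divalg_buildblocks} are chosen so delicately. A branch $\wp$ at $Q$ corresponds to a height-one prime of $\wh R_Q$ containing $t$, i.e.\ essentially to the $f$-adic (or more precisely, a component-of-the-closed-fiber-adic) valuation; passing to $F_\wp$ kills $t$ in a suitable sense. One needs that the elements $a$, $b$ — or rather their analogues $f/(f-t)$ and $(f-t^2)/(f-t-t^2)$ — become $n$-th powers, or at least that the symbol algebra splits, over each $F_\wp$. Concretely: over $F_\wp$ the element $f$ becomes a unit times a power of a uniformizer, and $f-t$, $f-t^2$, $f-t-t^2$ all differ from $f$ by elements of higher valuation (divisible by $t$), so modulo the maximal ideal of the relevant valuation ring these ratios reduce to $1$; since $F_\wp$ contains the $n$-th roots of unity, Hensel's Lemma forces the ratios to be $n$-th powers in $F_\wp$, hence $L_Q \otimes_{F_Q} F_\wp$ splits. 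Once all of this is in place — the local division algebra/adequacy from (the argument of) Lemma~\ref{divalg_buildblocks}, the branch-splitting from a Hensel's Lemma computation, and the coprimality of the indices — Lemma~\ref{main_lem} produces an adequate $G$-Galois field extension $E/F$, and therefore $G$ is admissible over $F$, completing the proof.
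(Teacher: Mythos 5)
Your global strategy is the paper's: fix a regular $T$-model, choose one point $Q_i \in S$ for each prime $p_i$ dividing $|G|$, put the trivial group at all the other points of $S$ so that the indices $(G:H_Q)$ have greatest common divisor $1$, and invoke Lemma~\ref{main_lem}; your Hensel's Lemma argument for splitting $L_Q \otimes_{F_Q} F_\wp$ along the branch (the elements $a,b$ are units congruent to $1$ modulo $t$) is also exactly the paper's. The genuine gap is in the local construction of the adequate $P$-Galois extension for $P \cong C_q \times C_{q'}$. You claim that inside the degree-$n$ division algebra $D$ of Lemma~\ref{divalg_buildblocks} the elements $a^{1/n}$ and $b^{1/n}$ together generate a $\Z/n \times \Z/n$-Galois maximal subfield. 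This is impossible on dimension grounds: a maximal subfield of a degree-$n$ division algebra has degree $n$ over the center, whereas $F_Q(a^{1/n},b^{1/n})$ has degree $n^2$; the subfield $F_Q(Y)$ of $D$ is maximal but only cyclic. Your alternative suggestion --- arranging the associated division algebra as a tensor product of two symbol algebras of degrees $m$ and $n$ --- is not what Lemma~\ref{divalg_buildblocks} provides, and since both factors would be $p$-primary it is not at all clear that such a tensor product is a division algebra over $F_Q$; no argument is offered.

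The repair is the paper's actual construction. Apply Lemma~\ref{divalg_buildblocks} with $n = qq' = |P_i|$, obtaining a division algebra $D_i$ of degree $qq'$ generated by $Y, Z$ with $Y^{qq'} = a$, $Z^{qq'} = b$, and $YZ = \zeta ZY$ for $\zeta$ a primitive $qq'$-th root of unity. Then set $y = Y^{q'}$ and $z = Z^{q}$, so that $y^q = a$ and $z^{q'} = b$; the computation $Y^{q'}Z^q = \zeta^{qq'} Z^q Y^{q'} = Z^q Y^{q'}$ shows that $y$ and $z$ commute, so $L_i := F_{Q_i}(y,z)$ is a commutative subfield of $D_i$ of degree $qq'$, hence a maximal subfield, hence adequate. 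Note also that you never verify that $L_i/F_{Q_i}$ really has group $P_i$, i.e., that $F_{Q_i}(y)$ and $F_{Q_i}(z)$ are linearly disjoint Kummer extensions of the expected degrees; the paper checks this by observing that $a$ and $b$ are not proper powers in $F_{Q_i}$ and that $F_{Q_i}(y)$ is totally ramified over the prime $(f_i)$ of $\wh R_{Q_i}$ while $F_{Q_i}(z)$ is unramified there. With these two points supplied, the rest of your outline goes through as written.
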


\begin{proof}
Let $p_1,\dots,p_r$ be the prime numbers dividing the order of $G$.
For each $i$ let $P_i$ be a Sylow $p_i$-subgroup of $G$.  Let $T$ be the valuation ring of $K$. As in the introduction to this section, 
there is a regular $T$-curve $\wh X$ such that the reduced irreducible components of the closed fiber $X$ are regular.  Choose 
distinct closed points $Q_1,\dots,Q_r \in X$ at which $X$ is regular.  Thus $Q_i$ lies on only one (regular) irreducible component of $X$,
and there is a unique branch $\wp_i$ of $X$ at the point $Q_i$.
By Proposition~6.6 of \cite{HH:FP}, there is a finite morphism
$f:\wh X \to\P^1_T$ whose fiber $S = f^{-1}(\infty) \subset X$ contains all the points $Q_i$ and all the points at which two irreducible components of $X$ meet.  

As above, let 
$I$ be the set of all indices $U, Q, \wp$, and consider the associated rings $\wh R_\xi$ and their fraction fields $F_\xi$ for $\xi \in I$.
To prove the result, it is enough to construct $P_i$-Galois adequate field extensions $L_i = L_{Q_i}$ of $F_{Q_i}$, for $i = 1, \ldots, r$, 
such that $L_i \otimes_{F_{Q_i}} F_{\wp_i}$ is a split extension $F_{\wp_i}^{\oplus|P_i|}$ of $F_{\wp_i}$.  Namely, if this is done, then let $H_Q = 1$ and $L_Q=F_Q$ for every point 
$Q \in S$ other than $Q_1,\dots,Q_r$.  Since the indices of the subgroups $H_{Q_i} := P_i$ are relatively prime for $i=1,\dots,r$, it follows that the indices of all the subgroups $H_Q$ (for $Q \in S$) are relatively prime; and Lemma~\ref{main_lem} will then imply the assertion.

So fix $i$.  By hypothesis, $P_i$ is an abelian $p_i$-group of rank at most~2, say $P_i = C_q \times C_{q'}$.  
Since $\wh X$ is regular and since $Q_i$ is a regular point of the closed fiber $X$, the maximal ideal of
the two-dimensional regular local ring $\wh R_i:=\wh R_{Q_i}$ is generated by two elements $f_i$ and $t_i$, where $t_i \in \wh R_i$ defines the reduced closed fiber of $\Spec(\wh R_i)$.  (Thus $t$ is a power of $t_i$ multiplied by a unit in $\wh R_i$.)  
Consider the elements $a = f_i/(f_i-t_i)$ and $b = (f_i-t_i^2)/(f_i-t_i-t_i^2)$
of $F_{Q_i}$, and let $L_i=F_{Q_i}(y,z)$ be the field extension of $F_{Q_i}$ defined by the relations $y^q=a$, $z^{q'}=b$.
We will show first that $L_i$ is a $P_i$-Galois adequate field extension of $F_{Q_i}$ and
then show that $L_i \otimes_{F_{Q_i}} F_{\wp_i}$ is a split extension of $F_{\wp_i}$. 

By the assumption on roots of unity, Kummer theory applies.  Neither $a$ nor $b$ is a $d$-th power in $F_{Q_i}$ for any $d>1$, so the extensions $F_{Q_i}(y)$ and $F_{Q_i}(z)$
of $F_{Q_i}$ are Galois with group $C_q$ and $C_{q'}$, respectively.  Here $F_{Q_i}(y)$
is totally ramified over the prime $(f_i)$ of $\Spec(\wh R_i)$, whereas $F_{Q_i}(z)$ is unramified there.  So the intersection of these two fields equals $F_{Q_i}$, showing that they
are linearly disjoint over $F_{Q_i}$.  Hence
the Galois group of their compositum $L_i/F_{Q_i}$ is in fact $P_i$ as claimed.

Let $\zeta\in F$ be a primitive $qq'$-th root of unity; this exists since $qq' = |P_i|$ divides $|G|$.
Consider the central simple $F_{Q_i}$-algebra $D_i$ generated by elements $Y,Z$ satisfying the relations $Y^{q'}=y$, $Z^{q}=z$, and $YZ=\zeta ZY$.
Thus $Y^{qq'} = f_i/(f_i-t_i)$ and $Z^{qq'} = (f_i-t_i^2)/(f_i-t_i-t_i^2)$. 
By Lemma~\ref{divalg_buildblocks}, $D_i$ is an $F_{Q_i}$-division algebra of degree $qq'$.    
Inductively, one sees that $YZ^q=\zeta^qZ^qY$ and thus $Y^{q'}Z^q=\zeta^{qq'}Z^qY^{q'}=Z^qY^{q'}$.  Consequently, $yz=zy$ in $D_i$; i.e.\ $L_i$ is a subfield of $D_i$.  But the degree of $L_i/F_{Q_i}$ is $qq'$, so $L_i$ is a maximal subfield; i.e., $L_i$ is adequate.

Finally, we show that $L_i \otimes_{F_{Q_i}} F_{\wp_i}$ is split.
The above elements $a$ and $b$ each lie in the discrete valuation ring $\wh R_{\wp_i}$, and in fact each is congruent to $1$ modulo the maximal ideal $(t_i)$ of that ring.  The reductions of $a$ and $b$ modulo $t_i$ are thus $qq'$-th powers in the residue field of $\wh R_{\wp_i}$, and so Hensel's Lemma implies that $a$ and $b$ are each $qq'$-th powers in $\wh R_{\wp_i}$ (using that $\wh R_{\wp_i}$ is complete $t$-adically and hence $t_i$-adically).  
Tensoring over $\wh R_{\wp_i}$ with $F_{\wp_i}$, we have that $L_i \otimes_{F_{Q_i}} F_{\wp_i}$ is a split extension of $F_{\wp_i}$, as desired.
\end{proof}

Combining the above with Proposition~\ref{main_forward} yields our main theorem:

\begin{thm} \label{main_thm}
Let $F$ be a finitely generated field extension of transcendence degree one over a complete discretely valued field $K$ with 
algebraically closed residue field $k$, and let $G$ be a finite group
of order not divisible by $\operatorname{char}(k)$.
Then $G$ is
admissible over $F$
if and only if each of its Sylow subgroups is abelian
of rank at most $2$.
\end{thm}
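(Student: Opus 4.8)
The plan is to derive Theorem~\ref{main_thm} by simply combining the two directions already established in the excerpt, after checking that their hypotheses are met in the present setting. Recall that Theorem~\ref{main_thm} concerns a finitely generated extension $F/K$ of transcendence degree one, where $K$ is complete discretely valued with \emph{algebraically closed} residue field $k$, and a finite group $G$ with $\cha(k) \nmid |G|$; the claim is that $G$ is admissible over $F$ precisely when every Sylow subgroup of $G$ is abelian of rank at most $2$.

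For the forward direction, I would invoke Proposition~\ref{main_forward} directly. Its hypotheses are exactly those of the theorem: $K$ is complete discretely valued with algebraically closed residue field $k$, and $F/K$ is finitely generated of transcendence degree one. Given a group $G$ admissible over $F$, Proposition~\ref{main_forward} tells us that for every prime $p \ne \cha(k)$ the Sylow $p$-subgroups of $G$ are abelian of rank at most $2$. Since $\cha(k) \nmid |G|$ by assumption, every prime dividing $|G|$ is different from $\cha(k)$, so in fact \emph{every} Sylow subgroup of $G$ is abelian of rank at most $2$. This gives the ``only if'' implication.

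For the converse, I would apply Proposition~\ref{main_converse}. Here one needs that $F$ contains a primitive $|G|$-th root of unity. This is immediate: since $\cha(k) \nmid |G|$ and $k$ is algebraically closed, $k$ contains a primitive $|G|$-th root of unity; by Hensel's Lemma this root of unity lifts to $K$ (which is complete with residue field $k$), and hence lies in $F \supseteq K$. Also $F$ is by hypothesis a finitely generated extension of transcendence degree one over the complete discretely valued field $K$. Thus all hypotheses of Proposition~\ref{main_converse} hold, and we conclude that $G$ is admissible over $F$ whenever every Sylow subgroup of $G$ is abelian of rank at most $2$.

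There is no real obstacle here: the theorem is a packaging of Propositions~\ref{main_forward} and~\ref{main_converse}, the only thing to verify being the root-of-unity hypothesis in the converse, which follows from the algebraic closedness of $k$ together with Hensel's Lemma. The two propositions between them cover both implications, so combining them completes the proof.
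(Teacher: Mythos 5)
Your proposal is correct and matches the paper's own proof essentially verbatim: the forward direction is Proposition~\ref{main_forward}, and the converse is Proposition~\ref{main_converse}, whose root-of-unity hypothesis is verified exactly as you do, by lifting a primitive $|G|$-th root of unity from the algebraically closed residue field $k$ to $K$ via Hensel's Lemma.
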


\begin{proof}
The forward direction is given by Proposition~\ref{main_forward}.  
The converse follows from Proposition~\ref{main_converse}, which applies because $K$ (and hence $F$) contains a primitive $|G|$-th root of unity by Hensel's Lemma, 
since the residue field $k$ is algebraically closed of characteristic not dividing~$|G|$.
\end{proof}


\def\cprime{$'$} \def\cprime{$'$} \def\cprime{$'$} \def\cprime{$'$}
  \def\cftil#1{\ifmmode\setbox7\hbox{$\accent"5E#1$}\else
  \setbox7\hbox{\accent"5E#1}\penalty 10000\relax\fi\raise 1\ht7
  \hbox{\lower1.15ex\hbox to 1\wd7{\hss\accent"7E\hss}}\penalty 10000
  \hskip-1\wd7\penalty 10000\box7}
\providecommand{\bysame}{\leavevmode\hbox to3em{\hrulefill}\thinspace}
\providecommand{\MR}{\relax\ifhmode\unskip\space\fi MR }

\providecommand{\MRhref}[2]{%
  \href{http://www.ams.org/mathscinet-getitem?mr=#1}{#2}
}
\providecommand{\href}[2]{#2}

\bigskip

\noindent Author information:

\medskip

\noindent David Harbater: Department of Mathematics, University of Pennsylvania, Philadelphia, PA 19104-6395, USA\\ email: {\tt harbater@math.upenn.edu}

\medskip

\noindent Julia Hartmann: Lehrstuhl A f\"ur Mathematik, RWTH Aachen University, 52062 Aachen, Germany\\ email:  {\tt 
hartmann@mathA.rwth-aachen.de}

\medskip

\noindent Daniel Krashen: Department of Mathematics, University of Georgia, Athens, GA 30602, USA\\ email: {\tt dkrashen@math.uga.edu}

\end{document}